\tikzset{snake it/.style={decorate, decoration=snake}}
\theoremstyle{plain}
\newtheorem{thm}{Theorem}[section]
\newtheorem{cor}[thm]{Corollary}
\newtheorem{lem}[thm]{Lemma}
\newtheorem{prop}[thm]{Proposition}
\newtheorem{question}[thm]{Question}
\theoremstyle{definition}
\theoremstyle{remark}
\newtheorem{rmk}[thm]{Remark}
\newcommand{\BA}{{\mathbb{A}}}
\newcommand{\BC}{{\mathbb{C}}}
\newcommand{\BF}{{\mathbb{F}}}
\newcommand{\BN}{{\mathbb{N}}}
\newcommand{\BP}{{\mathbb{P}}}
\newcommand{\BQ}{{\mathbb{Q}}}
\newcommand{\BR}{{\mathbb{R}}}
\newcommand{\BZ}{{\mathbb{Z}}}
\newcommand{\CE}{{\mathcal E}}
\newcommand{\CF}{{\mathcal F}}
\newcommand{\CH}{{\mathcal H}}
\newcommand{\CK}{{\mathcal K}}
\newcommand{\CL}{{\mathcal L}}
\newcommand{\CM}{{\mathcal M}}
\newcommand{\CO}{{\mathcal O}}
\newcommand{\CS}{{\mathcal S}}
\DeclareFontFamily{OT1}{rsfs}{}
\DeclareFontShape{OT1}{rsfs}{n}{it}{<-> rsfs10}{}
\DeclareMathAlphabet{\curly}{OT1}{rsfs}{n}{it}
\newcommand{\Coh}{\mathrm{Coh}}
\newcommand{\git}{\mathbin{
  \mathchoice{/\mkern-6mu/}
    {/\mkern-6mu/}
    {/\mkern-5mu/}
    {/\mkern-5mu/}}}
\begin{document}
\title[On the P=W conjecture for $\mathrm{SL}_n$]{On the P=W conjecture for $SL_n$}
\date{\today}

\author[M.A. de Cataldo]{Mark Andrea de~Cataldo} 
\address{Stony Brook University}
\email{mark.decataldo@stonybrook.edu}

\author[D. Maulik]{Davesh Maulik}
\address{Massachusetts Institute of Technology}
\email{maulik@mit.edu}

\author[J. Shen]{Junliang Shen}
\address{Massachusetts Institute of Technology}
\email{jlshen@mit.edu}

\begin{abstract}
Let $p$ be a prime number. We prove that the $P=W$ conjecture for $\mathrm{SL}_p$ is equivalent to the $P=W$ conjecture for $\mathrm{GL}_p$. As a consequence, we verify the $P=W$ conjecture for genus 2 and $\mathrm{SL}_p$. For the proof, we compute the perverse filtration and the weight filtration for the variant cohomology associated with the $\mathrm{SL}_p$-Hitchin moduli space and the $\mathrm{SL}_p$-twisted character variety, relying on Gr\"ochenig--Wyss--Ziegler's recent proof of the topological mirror conjecture by Hausel--Thaddeus.

Finally we discuss obstructions of studying the cohomology of the $\mathrm{SL}_n$-Hitchin moduli space via compact hyper-K\"ahler manifolds. 
\end{abstract}

\maketitle

\setcounter{tocdepth}{1} 

\tableofcontents
\setcounter{section}{-1}

\section{Introduction}

Throughout the paper, we work over the complex numbers $\BC$. 

Let $C$ be a nonsingular projective curve of genus $g\geq 2$, and let $G$ be a reductive group. The P=W conjecture of de Cataldo, Hausel, and Migliorini \cite{dCHM1} predicts a surprising connection between the topology of $G$-Hitchin systems and the Hodge theory $G$-character varieties via the non-abelian Hodge correspondence. More precisely, it suggests that the perverse filtration for the Hitchin system associated with the $G$-Dolbeault moduli space $\CM_{\mathrm{Dol}}$ coincides with the weight filtration associated with the corresponding $G$-Betti moduli space $\CM_B$,
\begin{equation}\label{P=W}
``P=W": \quad P_k H^d(\CM_\mathrm{Dol}, \BQ) = W_{2k} H^d(\CM_B, \BQ)= W_{2k+1} H^d(\CM_B, \BQ), \quad \forall k,d\geq 0;
\end{equation}
see Section \ref{Section1} for a brief review.

When $G= \mathrm{GL}_n$, the $P=W$ conjecture was established for any genus $g$ and rank $n=2$ in \cite{dCHM1}, and very recently, for genus $g=2$ and arbitrary rank $n$ in \cite{dCMS}. Furthermore, for aribitrary genus and rank, \cite{dCMS} shows $P=W$ for the tautological generators of the cohomology, and reduces the full $P=W$ conjecture to the multiplicativity of the perverse filtration.

The $G=\mathrm{PGL}_n$ case is equivalent to the $\mathrm{GL}_n$ case for a fixed curve $C$; see \cite{dCMS} the paragraph following Theorem 0.2. It is natural to explore non-trivial examples of the $P=W$ phenomenon for a reductive group $G$ other than $\mathrm{GL}_n$ and $\mathrm{PGL}_n$. 

The purpose of this paper is to study $P=W$ for $G = \mathrm{SL}_n$. The case of $\mathrm{SL}_2$ was already established in \cite{dCHM1}. We provide in the following theorem an affirmative answer to the $P=W$ conjecture when the curve has genus $g=2$ and the rank $n$ is any prime number.

\begin{thm}\label{thm0.1}
The $P=W$ conjecture (\ref{P=W}) holds when $C$ has genus $g =2$ and $G = \mathrm{SL}_n$ with $n$ a prime number.
\end{thm}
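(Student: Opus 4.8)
The plan is to reduce $P=W$ for $\mathrm{SL}_p$ to $P=W$ for $\mathrm{GL}_p$ in genus $2$ --- which holds by \cite{dCMS} --- by splitting the cohomology of the $\mathrm{SL}_p$-Dolbeault and $\mathrm{SL}_p$-Betti moduli spaces under a natural finite symmetry and verifying the equality of filtrations one isotypic component at a time. Let $\Gamma:=H^1(C,\BZ/p\BZ)\cong \mathrm{Jac}(C)[p]$. Tensoring by $p$-torsion line bundles on the Dolbeault side, and twisting by characters of $\pi_1(C)$ of order dividing $p$ on the Betti side, defines actions of $\Gamma$ on $\CM^{\mathrm{SL}_p}_{\mathrm{Dol}}$ and on $\CM^{\mathrm{SL}_p}_B$. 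These two actions correspond under the non-abelian Hodge correspondence, they commute with the Hitchin map $h\colon\CM^{\mathrm{SL}_p}_{\mathrm{Dol}}\to\CB$, and on the Betti side they are algebraic, hence preserve the mixed Hodge structure on $H^*(\CM^{\mathrm{SL}_p}_B,\BQ)$. Applying the decomposition theorem for $h$ $\Gamma$-equivariantly, both the perverse filtration and the weight filtration split along the isotypic decompositions
\[
H^*(\CM^{\mathrm{SL}_p}_{\mathrm{Dol}},\BQ)=\bigoplus_{\kappa\in\widehat\Gamma}H^*(\CM^{\mathrm{SL}_p}_{\mathrm{Dol}},\BQ)_\kappa,\qquad H^*(\CM^{\mathrm{SL}_p}_B,\BQ)=\bigoplus_{\kappa\in\widehat\Gamma}H^*(\CM^{\mathrm{SL}_p}_B,\BQ)_\kappa,
\]
so \eqref{P=W} for $\mathrm{SL}_p$ amounts to its validity on each summand indexed by $\kappa\in\widehat\Gamma$.

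The trivial character $\kappa=\mathbf 1$ picks out the $\Gamma$-invariants, namely $H^*(\CM^{\mathrm{PGL}_p}_{\mathrm{Dol}},\BQ)$ and $H^*(\CM^{\mathrm{PGL}_p}_B,\BQ)$; as recalled in the Introduction, $P=W$ for $\mathrm{PGL}_p$ is equivalent to $P=W$ for $\mathrm{GL}_p$, hence known in genus $2$. The real content is the \emph{variant} summands $\kappa\neq\mathbf 1$, and here the primality of $p$ is essential: every nontrivial $\kappa$ has order exactly $p$, so $\ker\kappa$ determines a connected unramified cyclic $p$-cover $\pi_\kappa\colon\widetilde C_\kappa\to C$ with Galois group $\langle\sigma\rangle\cong\BZ/p\BZ$. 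Higgs bundles fixed by tensoring with the order-$p$ line bundle defining $\pi_\kappa$ are pushforwards from $\widetilde C_\kappa$ by the spectral correspondence, so the $\kappa$-isotypic cohomology is governed by a \emph{rank one} object attached to $\pi_\kappa$. Concretely, the input is the refined topological mirror symmetry of Hausel--Thaddeus in the form proved by Gr\"ochenig--Wyss--Ziegler: for each $\kappa\neq\mathbf 1$ it provides a canonical isomorphism, compatible with the Hitchin fibrations, between $H^*(\CM^{\mathrm{SL}_p}_{\mathrm{Dol}},\BQ)_\kappa$ and a shift and Tate twist of the (gerbe-twisted) cohomology of a rank one Prym-type Hitchin system associated to $\pi_\kappa\colon\widetilde C_\kappa\to C$. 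One then transports the picture to the Betti side by applying non-abelian Hodge to this rank one moduli problem.

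On a rank one Prym-type Hitchin system the $P=W$ equality is classical: it is essentially the $\mathrm{GL}_1$ (Jacobian) situation, where the perverse filtration coming from the Hitchin map is the standard filtration on the cohomology of an abelian variety and both filtrations are read off directly from the Hodge/weight grading. Pulling this equality back through the Gr\"ochenig--Wyss--Ziegler isomorphism establishes \eqref{P=W} on every variant summand $H^*(\CM^{\mathrm{SL}_p}_{\mathrm{Dol}},\BQ)_\kappa$, $\kappa\neq\mathbf 1$; together with the $\kappa=\mathbf 1$ summand this yields the theorem. The same scheme explains why the reduction works only for prime $n$: for composite $n$ the analogous ``smaller'' loci are higher-rank Higgs moduli spaces over intermediate covers, which are as hard as the original problem rather than rank one.

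The main obstacle is the compatibility claims of the second and third paragraphs, not the rank one $P=W$ computation. Gr\"ochenig--Wyss--Ziegler's result is a priori an identification of $p$-adically defined stringy and twisted Hodge numbers, and one must upgrade it to an isomorphism of cohomology \emph{as filtered objects}, respecting simultaneously the perverse filtration of the Hitchin map and the weight filtration transported from the Betti realisation; one must also check that non-abelian Hodge intertwines the $\mathrm{SL}_p$ and Prym sides uniformly across the whole isotypic decomposition, even though non-abelian Hodge is not algebraic. This calls for a geometric model of the mirror isomorphism --- e.g. a correspondence over a common cover of the two Hitchin bases, or a refinement of the $p$-adic integration comparison to the level of (perverse) sheaves --- so that perversity and weights can be propagated in tandem. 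That is the technical heart of the argument.
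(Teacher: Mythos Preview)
Your reduction via the $\Gamma$-decomposition and the treatment of the invariant summand are exactly what the paper does. The gap is entirely in the variant part, and you have in fact put your finger on it in your last paragraph---but the difficulty is sharper than you indicate.

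Even granting a sheaf-theoretic upgrade of Gr\"ochenig--Wyss--Ziegler that identifies $H^*(\CM^{\mathrm{SL}_p}_{\mathrm{Dol}},\BQ)_\kappa$ with a shifted rank-one Prym Hitchin cohomology \emph{compatibly with the perverse filtration}, this tells you nothing about the weight filtration on $H^*(\CM^{\mathrm{SL}_p}_B,\BQ)_\kappa$. Your step ``transport to the Betti side by applying non-abelian Hodge to the rank one problem'' does not close the loop: you would still need an isomorphism $H^*(\CM^{\mathrm{SL}_p}_B,\BQ)_\kappa \cong H^*(\text{rank-one Prym Betti})$ respecting \emph{weights}, and neither GWZ (a Dolbeault statement) nor non-abelian Hodge (which does not preserve weights---that is the content of $P=W$) supplies it. There is no known Betti-side mirror statement to invoke.

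The paper avoids this entirely by arguing numerically rather than via a filtered isomorphism. It proves (Theorem~\ref{thm1.5}) that on the variant cohomology both filtrations are concentrated in a single step: $\mathrm{Gr}^P_i H^{i+j}_{\mathrm{var}}=0$ unless $j=c_n:=n(n-1)(g-1)$, and likewise for $\mathrm{Gr}^W$. The perverse side combines a geometric bound from the endoscopic locus of Hausel--Pauly (Proposition~\ref{prop1.4}), relative Hard Lefschetz, and the Betti numbers extracted from GWZ only at the level of $E$-polynomials; these are fed into an elementary combinatorial criterion (Proposition~\ref{prop2.1}) forcing a $c_n$-sequence. The weight side is handled independently: the variant $E$-polynomial of $\CM_B$ is computed directly from Mereb's character-variety formulas (Proposition~\ref{prop3.5}), shown to be Hodge--Tate with a parity vanishing (Lemmas~\ref{lem3.6},~\ref{lem3.7}), and then a second combinatorial criterion (Proposition~\ref{prop2.2}) forces the same concentration. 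No filtered comparison between the $\mathrm{SL}_p$ and endoscopic moduli is ever needed; GWZ and Mereb enter only through dimension counts.
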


We refer to Section \ref{Section1.5} for more precise statements. Here we briefly explain the main difference between the $G = \mathrm{GL}_n$ case and the $G = \mathrm{SL}_n$ case.

Let $\CM_{\mathrm{Dol}}$ be the $\mathrm{SL}_n$-Dolbeault moduli space assocated with a curve $C$ of genus $g \geq 2$ and a line bundle $L$ with $\mathrm{gcd}\left(c_1(L), n\right)=1$ (see Section \ref{Section1}). There is a natural action of the finite group $\Gamma = \mathrm{Pic}^0(C)[n]$ on $\CM_{\mathrm{Dol}}$ via tensor product. This group action yields a decomposition with respect to the irreducible characters of $\Gamma$, 
\begin{equation}\label{decomp}
    H^*(\CM_{\mathrm{Dol}}, \BQ) = H^*(\CM_{\mathrm{Dol}}, \BQ)^\Gamma \bigoplus H_{\mathrm{var}}^*(\CM_{\mathrm{Dol}}, \BQ).
\end{equation}
Here the $\Gamma$-invariant part $H^*(\CM_{\mathrm{Dol}}, \BQ)^\Gamma$ corresponds to the trivial character, and the variant cohomology $H_{\mathrm{var}}^*(\CM_{\mathrm{Dol}}, \BQ)$ corresponds to all the non-trivial characters. Note that we have the same decomposition (\ref{decomp}) for the Betti moduli space $\CM_B$. The $\Gamma$-invariant part 
\begin{equation}\label{invariant}
H^*(\CM_{\mathrm{Dol}}, \BQ)^\Gamma \subset H^*(\CM_{\mathrm{Dol}}, \BQ).
\end{equation}
is canonically identified with the cohomology of the corresponding moduli of stable $\mathrm{PGL}_n$-Higgs bundles (see (\ref{Gamma1})). In particular, it is the sub-vector space of $H^*(\CM_{\mathrm{Dol}}, \BQ)$ generated by the tautological classes with respect to a universal family.

As a consequence, $P=W$ for $\mathrm{GL}_n$ is equivalent to $P=W$ for the invariant part (\ref{invariant}). The following theorem proves $P=W$ for the variant cohomology for any genus when $n$ is prime.

\begin{thm}\label{thm0.2}
We have $P=W$ for the variant cohomology $H_{\mathrm{var}}^*(\CM_{\mathrm{Dol}}, \BQ)$ for any genus $g \geq 2$ with $n$ a prime number,
\[
P_k H_{\mathrm{var}}^d(\CM_\mathrm{Dol}, \BQ) = W_{2k} H_{\mathrm{var}}^d(\CM_B, \BQ), \quad \forall k,d\geq 0.
\]
\end{thm}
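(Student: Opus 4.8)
The plan is to exploit the $\Gamma$-action to split the variant cohomology into ``rank-one'' summands, and then to verify $P=W$ on each summand using the abelian geometry of Prym fibrations, with the theorem of Gr\"ochenig--Wyss--Ziegler supplying the needed identifications.

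\emph{Step 1 (isotypic reduction).} The $\Gamma=\mathrm{Pic}^0(C)[n]$ action on $\CM_{\mathrm{Dol}}$ commutes with the Hitchin map (it is fiberwise translation on the abelian-type Hitchin fibers) and is by algebraic automorphisms, and under non-abelian Hodge it is carried to the corresponding action on $\CM_B$. Hence $\Gamma$ preserves the perverse filtration $P_\bullet$ on $H^*(\CM_{\mathrm{Dol}},\BQ)$ and the weight filtration $W_\bullet$ on $H^*(\CM_B,\BQ)$. Since $n=p$ is prime, $\Gamma\cong(\BZ/p)^{2g}$ is a finite abelian group and $H^*_{\mathrm{var}}=\bigoplus_\kappa H^*_\kappa$ decomposes over the nontrivial characters $\kappa$ of $\Gamma$, compatibly with $P_\bullet$, with $W_\bullet$, and with non-abelian Hodge on both moduli spaces. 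It therefore suffices to prove $P_kH^d_\kappa(\CM_{\mathrm{Dol}},\BQ)=W_{2k}H^d_\kappa(\CM_B,\BQ)$ for each fixed nontrivial $\kappa$.

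\emph{Step 2 (the rank-one model).} A nontrivial $\kappa$ determines an index-$p$ subgroup of $\Gamma$, equivalently a connected degree-$p$ \'etale cover $\pi_\kappa\colon\widehat C_\kappa\to C$. Over the locus of smooth spectral curves the $\mathrm{SL}_p$-Hitchin fibers are Prym abelian varieties on which $\Gamma$ acts by translation, hence trivially on cohomology; so, in the decomposition theorem for the Hitchin map, the summand $H^*_\kappa$ is supported over the ``endoscopic'' locus in the Hitchin base formed by the spectral curves compatible with $\pi_\kappa$ (a Ng\^o-type support statement). Concretely this yields an identification, compatible with the Hitchin maps and with a fixed cohomological shift and Tate twist, of $H^*_\kappa(\CM_{\mathrm{Dol}},\BQ)$ with a summand of the cohomology of a twisted Prym--Hitchin fibration $\Fp_\kappa\colon\CN_\kappa\to B_\kappa$ attached to $\widehat C_\kappa$, whose base $B_\kappa$ is an affine space and whose fibers are abelian varieties; similarly $H^*_\kappa(\CM_B,\BQ)$ is identified with the Betti realization of the same model. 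That the two realizations of $\CN_\kappa$ again correspond under non-abelian Hodge, and that these identifications intertwine $P_\bullet$ and $W_\bullet$ up to the same shift, is where the topological mirror symmetry of Hausel--Thaddeus, in the form proved by Gr\"ochenig--Wyss--Ziegler, is used, applied one isotypic summand at a time.

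\emph{Step 3 ($P=W$ for the model).} On the model $\CN_\kappa$ the Hitchin map has abelian-variety fibers over an affine-space base, so the decomposition theorem and the multiplicativity of the perverse filtration are elementary --- in contrast with the invariant part, where multiplicativity is the essential difficulty. The perverse filtration on the Dolbeault side is then the filtration by ``number of one-cycles of the fiber'', i.e.\ it is pulled back from the classical filtration of the cohomology of an abelian variety by cohomological degree along the universal family; on the Betti side the corresponding model is built from algebraic tori and a Prym of character varieties, whose mixed Hodge structure is of Hodge--Tate type with an equally explicit weight filtration. A direct comparison --- essentially the rank-one ($\mathrm{GL}_1$) case of $P=W$ already present in \cite{dCHM1} --- gives $P_k=W_{2k}$ on $\CN_\kappa$, and pulling this back through Step 2 proves the theorem; the numerical identity of Gr\"ochenig--Wyss--Ziegler can serve as a check on dimensions. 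The heart of the matter is Step 2: upgrading the Hausel--Thaddeus/Gr\"ochenig--Wyss--Ziegler statement --- a priori an equality of (stringy) Hodge numbers --- to a \emph{filtered} isomorphism on each isotypic summand, which needs a support-theorem computation of the perverse filtration on the $\kappa$-part of the $\mathrm{SL}_p$-Hitchin complex and its precise matching (including shifts) with that of the Prym model $\CN_\kappa$, together with the compatibility of the Dolbeault and Betti avatars of $\CN_\kappa$ across non-abelian Hodge. Steps 1 and 3 are comparatively formal.
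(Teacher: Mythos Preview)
Your approach is conceptually attractive but differs substantially from the paper's, and Step~2 contains a genuine gap.

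The paper does not construct any filtered isomorphism with a Prym/endoscopic model. Instead it proves (Theorem~\ref{thm1.5}) that both filtrations on $H^k_{\mathrm{var}}$ are concentrated in a \emph{single} graded piece: $\mathrm{Gr}^P_iH^k_{\mathrm{var}}=0$ unless $i=k-c_n$ with $c_n=n(n-1)(g-1)$, and $\mathrm{Gr}^W_jH^k_{\mathrm{var}}=0$ unless $j=2(k-c_n)$. Once each filtration jumps exactly once at the matching index, $P=W$ is automatic. The argument is purely numerical. For the perverse side it combines the Hausel--Pauly bound (Proposition~\ref{prop1.4}) giving $P_{k-c_n}H^k_{\mathrm{var}}=H^k_{\mathrm{var}}$, Relative Hard Lefschetz, and the palindromic symmetry of the variant Betti numbers extracted from \cite{GWZ} (used only as an $E$-polynomial identity, Corollary~\ref{cor3.2}), fed into an abstract combinatorial criterion (Proposition~\ref{prop2.1}). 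For the weight side it uses the character-variety $E$-polynomials of \cite{HRV, SL} (Proposition~\ref{prop3.5}), a parity/Hodge--Tate vanishing (Lemmas~\ref{lem3.6}--\ref{lem3.7}), and the Lefschetz symmetry transported to $\CM_B$ (Corollary~\ref{cor3.3}), fed into a second combinatorial criterion (Proposition~\ref{prop2.2}).

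Your Step~2 asserts an identification of $H^*_\kappa(\CM_{\mathrm{Dol}})$ with the cohomology of a Prym model $\CN_\kappa$ \emph{compatible with the perverse filtration}, and a parallel identification on the Betti side \emph{compatible with the weight filtration and with non-abelian Hodge}. Neither is supplied by \cite{GWZ}: that result is an equality of (stringy) $E$-polynomials obtained via $p$-adic integration, not a cohomological or sheaf-theoretic statement, and upgrading it to a filtered isomorphism of perverse sheaves is a separate support-theorem argument which you describe but do not carry out. The Betti half is even more delicate: there is no a priori ``Betti realization of $\CN_\kappa$'' known to match $H^*_\kappa(\CM_B)$ as a mixed Hodge structure, and the paper sidesteps this entirely by computing the weight filtration directly from the $E$-polynomials of \cite{HRV, SL}. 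As written, your proposal reduces Theorem~\ref{thm0.2} to claims at least as hard as the theorem itself. The endoscopic picture you sketch is indeed the heuristic the paper cites at the end of Section~\ref{Section1.5}, but it is not the proof mechanism.
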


Theorem \ref{thm0.2} shows that, for a curve $C$ of genus $g \geq 2$, the $P=W$ conjecture for the groups $\mathrm{GL}_n$, $\mathrm{SL}_n$, and $\mathrm{PGL}_n$ are equivalent when $n$ is prime. The proof of Theorem \ref{thm0.2} relies on the recent proof \cite{GWZ} of the topological mirror conjecture \cite{HT}, and the calculations of $E$-polynomials for character varieties \cite{HRV, SL}.

In Section \ref{Section4}, we discuss obstructions of studying the cohomology of $\CM_{\mathrm{Dol}}$ via compact hyper--Ka\"ahler manifolds; see Propositions \ref{prop4.2} and \ref{prop4.3}. In particular, we provide obstructions to extend the method of \cite{dCMS} for proving the $P=W$ conjecture for genus $2$ and $\mathrm{GL}_n$ to the genus $2$ and $\mathrm{SL}_n$ case.

\subsection*{Acknowledgement}
We are grateful to Chen Wan and Zhiwei Yun for helpful discussions. 
The first-named author is partially supported by NSF DMS Grant 1901975.

\section{Hitchin moduli spaces and character varieties}\label{Section1}

Througout the section, we let $C$ be a nonsingular projective curve of genus $g \geq 2$. We also fix 2 integers $n,d$ satisfying $n \geq 2$ and $\mathrm{gcd}(n,d)=1$, and a line bundle $L \in \mathrm{Pic}^d(C)$.

\subsection{Moduli spaces} 
We review the two moduli spaces $\CM_{\mathrm{Dol}}$ and $\CM_{B}$ associated with the curve $C$, the group $\mathrm{SL}_n$, and the line bundle $L \in \mathrm{Pic}^d(C)$. We refer to \cite{dCHM1, Hit, Hit1, HLR, HRV} for more details.

The Dolbeault moduli space $\CM_{\mathrm{Dol}}$ parametrizes stable Higgs bundles 
\[
(\CE, \theta), \quad \theta: \CE \rightarrow \CE \otimes \Omega_C
\]
satisfying the conditions
\[
\mathrm{trace}(\theta) = 0, \quad \mathrm{det}(\CE) = L.
\]
The Hitchin system associated with $\CM_{\mathrm{Dol}}$ is a proper surjective morphism $\pi: \CM_{\mathrm{Dol}} \rightarrow \Lambda$ sending $(\CE, \theta)$ to the characteristic polynomial
\[
\mathrm{char}(\theta) \in \Lambda := \oplus_{i=2}^n H^0(C,\Omega_C^{\otimes i}).
\]
It is Lagrangian with respect to the canonical hyper-K\"ahler metric on $\CM_{\mathrm{Dol}}$. 
The Betti moduli space $\CM_B$ is the $\mathrm{SL}_n$-twisted character variety,
\begin{equation}\label{M_B}
\CM_B := \Big{\{}a_k, b_k \in \mathrm{SL}_n,~k=1,2,\dots,g: ~~\prod_{j=1}^g [a_j, b_j] = e^{\frac{2\pi \sqrt{-1} d}{n}}\mathrm{Id}_n \Big{\}}\git \mathrm{SL}_n,
\end{equation}
which is obtained as an affine $\mathrm{GIT}$ quotient with respect to the action by conjugation. 

Both $\CM_{\mathrm{Dol}}$ and $\CM_B$ are nonsingular quasi-projective varieties satisfying
\[
\mathrm{dim} (\CM_{\mathrm{Dol}}) = 2 \mathrm{dim}(\Lambda) = \mathrm{dim} (\CM_{B}) = (n^2-1)(2g-2).
\]
The non-abelian Hodge theory \cite{Simp, Si1994II} provides a differemorphism between $\CM_\mathrm{Dol}$ and $\CM_B$, which identifies the cohomology
\begin{equation}\label{NAH}
    H^*(\CM_{\mathrm{Dol}}, \BQ) = H^*(\CM_{B}, \BQ).
\end{equation}

\subsection{Perverse filtrations} The $P=W$ conjecture (\ref{P=W}) predicts the match of two completely different structures under the identification (\ref{NAH}), namely the perverse filtration associated with $\pi: \CM_{\mathrm{Dol}} \rightarrow \Lambda$ and the weight filtration with respect to the mixed Hodge structure on $\CM_B$.

The perverse filtration 
\begin{equation} \label{Perv_Filtration}
    P_0H^\ast(\CM_{\mathrm{Dol}}, \BQ) \subset P_1H^\ast(\CM_{\mathrm{Dol}}, \BQ) \subset \dots \subset P_kH^\ast(\CM_{\mathrm{Dol}}, \BQ) \subset \dots \subset H^\ast(\CM_{\mathrm{Dol}}, \BQ)
\end{equation}
is an increasing filtration defined via the perverse truncation functor \cite[Section 1.4.1]{dCHM1}. It is governed by the topology of the Hitchin system $\pi: \CM_{\mathrm{Dol}} \to \Lambda$. We recall the following useful characterization of the perverse filtration (\ref{Perv_Filtration}) by de Cataldo--Migliorini \cite{hyperplane}.

\begin{thm}[de Cataldo--Migliorini \cite{hyperplane}] \label{thm1.1}
Let $\Lambda^s \subset \Lambda$ denote an $s$-dimensional general linear sub-space. Then we have
\[
P_iH^{i+k}(\CM_{\mathrm{Dol}}, \BQ) = \mathrm{Ker}\left(H^{i+k}(\CM_{\mathrm{Dol}}, \BQ) \rightarrow H^{i+k}(\pi^{-1}(\Lambda^{k-1}), \BQ) \right).
\]
\end{thm}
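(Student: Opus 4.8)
The plan is to reformulate the statement at the level of the constructible derived category and then prove it by a Lefschetz-type induction, in the spirit of de Cataldo--Migliorini. First I would set $K := R\pi_* \BQ_{\CM_{\mathrm{Dol}}} \in D^b_c(\Lambda)$, so that by definition $P_i H^{i+k}(\CM_{\mathrm{Dol}}, \BQ) = P_i \mathbb{H}^{i+k}(\Lambda, K)$ is the image of $\mathbb{H}^{i+k}(\Lambda, {}^p\tau_{\le i} K) \to \mathbb{H}^{i+k}(\Lambda, K)$ induced by the perverse truncation triangle ${}^p\tau_{\le i}K \to K \to {}^p\tau_{> i} K \xrightarrow{+1}$. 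Since $\Lambda$ is a vector space, a general $(k-1)$-dimensional linear subspace is a general linear section, and one may complete it to a general complete flag $\Lambda^0 \subset \Lambda^1 \subset \dots \subset \Lambda^N = \Lambda$, $N = \dim \Lambda$, with $\pi^{-1}(\Lambda^j)$ obtained from $\CM_{\mathrm{Dol}}$ by intersecting with $N-j$ general hyperplanes pulled back from $\Lambda$. It then suffices to prove, for an arbitrary $K \in D^b_c(\Lambda)$ and a general flag, the identity
\[
\mathrm{image}\!\left( \mathbb{H}^d(\Lambda, {}^p\tau_{\le i} K) \to \mathbb{H}^d(\Lambda, K)\right) = \mathrm{Ker}\!\left( \mathbb{H}^d(\Lambda, K) \to \mathbb{H}^d(\Lambda^{d-i-1}, K|_{\Lambda^{d-i-1}}) \right),
\]
the theorem being the case $d = i + k$.

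The two ingredients I would use are: \textbf{(a)} \emph{Artin--Grothendieck affine vanishing}: since $\Lambda \cong \mathbb{A}^N$ is affine, $\mathbb{H}^m(\Lambda, P) = 0$ for $m > 0$ whenever $P$ is a perverse sheaf on $\Lambda$ (dually $\mathbb{H}^m_c(\Lambda, P) = 0$ for $m < 0$), and the same holds on every affine linear section; and \textbf{(b)} a \emph{Lefschetz hyperplane theorem for perverse sheaves}: for a \emph{general} hyperplane section $\iota\colon \Lambda' = \Lambda \cap H \hookrightarrow \Lambda$, the shifted restriction $\iota^*[-1]$ is perverse $t$-exact, hence commutes with perverse truncation, $\iota^*({}^p\tau_{\le i} K)[-1] \cong {}^p\tau_{\le i}(\iota^* K[-1])$, while for the open complement $\jmath\colon \Lambda \setminus \Lambda' \hookrightarrow \Lambda$, which is an affine morphism, $\jmath_!$ and $R\jmath_*$ are perverse $t$-exact. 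Transversality of the general $H$ to a fixed Whitney stratification adapted to $K$ (via Bertini) is what makes (b) work.

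With (a) and (b) in place, I would argue by induction on the codimension $c = N-(k-1)$ of the section, removing one general hyperplane at a time. For a single step, with $\iota\colon Z = Y \cap H \hookrightarrow Y$ a general hyperplane section of an affine linear section $Y \subset \Lambda$ and $\jmath\colon Y \setminus Z \hookrightarrow Y$ the affine open complement, the excision triangle $\jmath_! \jmath^* K \to K \to \iota_* \iota^* K \xrightarrow{+1}$ identifies $\mathrm{Ker}\!\left(\mathbb{H}^d(Y, K) \to \mathbb{H}^d(Z, \iota^* K)\right)$ with the image of $\mathbb{H}^d(Y, \jmath_!\jmath^* K)$ in $\mathbb{H}^d(Y, K)$. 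Propagating the perverse truncation of $K$ through $\jmath^*$ and $\jmath_!$ by (b), and using (a) together with the affineness of $Y$ to kill the off-diagonal terms in the resulting spectral sequences, one checks that passing to one further general hyperplane section raises by exactly one the perverse index of the piece of the filtration being computed. The base case is a Lefschetz hyperplane statement, namely that $\mathbb{H}^d(\CM_{\mathrm{Dol}}, \BQ) \to \mathbb{H}^d(\pi^{-1}(\Lambda^s), \BQ)$ is injective for $s$ in the relevant range, again a consequence of (a) and (b). Iterating $c$ times gives the formula.

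The hard part will be ingredient (b) together with the bookkeeping of the inductive step: one needs to fix a single stratification of $\pi\colon \CM_{\mathrm{Dol}} \to \Lambda$ transverse to \emph{all} hyperplanes of the flag at once, and then to verify that the connecting maps and $E_2$-differentials coupling the perverse truncation of $K$ to the excision triangle vanish or are surjective in exactly the required range---this is precisely where affine vanishing enters and is the delicate point. (An alternative, available here because $\pi$ is proper and $\CM_{\mathrm{Dol}}$ is smooth, would be to use the decomposition theorem and relative hard Lefschetz to split $R\pi_*\BQ_{\CM_{\mathrm{Dol}}}[\dim \CM_{\mathrm{Dol}}]$ into shifted intersection complexes $IC_{\overline{W}}(\mathcal{L})$ and to treat each summand by the classical Lefschetz hyperplane theorem on $\overline{W}$; the argument sketched above is preferable in being elementary and using neither deep input.)
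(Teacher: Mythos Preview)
The paper does not supply a proof of Theorem~\ref{thm1.1}; it is quoted directly from \cite{hyperplane} as an input to be used (in the proof of Proposition~\ref{prop1.4}). So there is nothing in the present paper to compare your proposal against.

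That said, your outline is essentially the argument of \cite{hyperplane} itself: one works with $K=R\pi_*\BQ$ in $D^b_c(\Lambda)$, chooses a general affine flag transverse to a fixed stratification, and uses (a) Artin vanishing on affine varieties together with (b) the $t$-exactness of $\iota^*[-1]$ for a generic hyperplane section to identify the perverse filtration with the flag/kernel filtration. Your bookkeeping is a little loose---the ``base case'' is not a Lefschetz injectivity statement but rather the trivial extremes of the filtration, and the inductive step is cleaner if phrased as a single spectral sequence in the flag rather than as repeated excisions---but the ingredients and the logic are the right ones. The alternative you mention (decomposition theorem plus relative hard Lefschetz) is indeed avoidable here, which is one of the main points of \cite{hyperplane}.
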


\subsection{$\Gamma$-actions}\label{Section1.3}
Let $\CL \in \mathrm{Pic}^0(C)[n]$ be a $n$-torsion line bundle. Then for $(\CE, \theta) \in \CM_{\mathrm{Dol}}$, we have $(\CL \otimes \CE, \theta ) \in \CM_{\mathrm{Dol}}$. Hence the finite abelian group 
\[
\Gamma = \mathrm{Pic}^0(C)[n] \simeq (\BZ/n\BZ)^{2g}
\]
acts on $\CM_{\mathrm{Dol}}$, with the quotient
\[
\hat{\CM}_{\mathrm{Dol}} = \CM_{\mathrm{Dol}}/\Gamma
\]
a Deligne--Mumford stack parametrizing stable $\mathrm{PGL}_n$-Higgs bundles. The Hitchin map $\pi: \CM_{\mathrm{Dol}} \to \Lambda$ is $\Gamma$-equivariant with the trivial action on the Hitchin base $\Lambda$. The $\mathrm{PGL}_n$-Hitchin map $\hat{\pi}: \hat{\CM}_{\mathrm{Dol}} \to \Lambda$ fits into the commutative diagram
\begin{equation}\label{quotient}
\begin{tikzcd}[column sep=small]
\CM_{\mathrm{Dol}} \arrow{rr} \arrow[swap]{dr}{\pi}& &\hat{\CM}_{\mathrm{Dol}} \arrow{dl}{\hat{\pi}}\\
& \Lambda & 
\end{tikzcd}    
\end{equation}
where the horizontal arrow is the quotient map. We obtain from (\ref{quotient}) the canonical isomorphism
\begin{equation}\label{Gamma1}
H^*(\CM_{\mathrm{Dol}}, \BQ)^\Gamma  =  H^*(\hat{\CM}_{\mathrm{Dol}}, \BQ)
\end{equation}
compactible with the perverse filtrations,
\[
P_kH^*(\CM_{\mathrm{Dol}}, \BQ)^\Gamma  =  P_kH^*(\hat{\CM}_{\mathrm{Dol}}, \BQ),
\]
Here the perverse filtration for $\hat{\CM}_{\mathrm{Dol}}$ is associated with $\hat{\pi}: \hat{\CM}_{\mathrm{Dol}} \to \Lambda$.

We also have the corresponding $\Gamma$-action on the Betti moduli space $\CM_B$. More precisely, we view $\Gamma$ as a sub-group of $(\BC^*)^{\times 2g}$, which acts on the matrices $a_i, b_i \in \mathrm{SL}_n$ of (\ref{M_B}) by multiplication. The $\Gamma$-action on $\CM_B$ is induced by the action of the rank 1 character variety $(\BC^*)^{\times 2g}$ on the $\mathrm{GL}_n$-twisted character variety, which,  via the non-abelian Hodge correspondence,  coincides with the action of the rank 1 Hitchin moduli space $T^*\mathrm{Pic}^0(C)$ on the $\mathrm{GL}_n$-Hitchin moduli space. Hence the $\Gamma$-decomposition 
\[
H^*(\CM_B, \BQ) = H^*(\CM_B, \BQ)^\Gamma \bigoplus H_{\mathrm{var}}^*(\CM_B, \BQ)
\]
matches the $\Gamma$-decomposition (\ref{decomp}) for $\CM_{\mathrm{Dol}}$ via the non-abelian Hodge correspondence (\ref{NAH}). Analagous to (\ref{Gamma1}), we have a canonical isomorphism of mixed Hodge structures
\begin{equation}\label{Gamma2}
H^*(\CM_B, \BQ)^\Gamma  =  H^*(\hat{\CM}_B, \BQ)
\end{equation}
with $\hat{\CM}_B$ the $\mathrm{PGL}_n$-character variety diffeomorphic to $\hat{\CM}_{\mathrm{Dol}}$ via the non-abelian Hodge correspondence for  $\mathrm{PGL}_n$. 

In conclusion, we have the following proposition concerning the $P=W$ for the $\Gamma$-invariant cohomology.
\begin{prop}\label{prop1.2}
Assume that the $P=W$ conjecture (\ref{P=W}) holds for the curve $C$, the group $G= \mathrm{GL}_n$, and the degree $d$. Then we have
\[
P_k  H^*(\CM_{\mathrm{Dol}}, \BQ)^\Gamma = W_{2k}H^*(\CM_B, \BQ)^\Gamma, \quad \forall k\geq 0.
\]
\end{prop}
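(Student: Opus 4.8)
The plan is to reduce the statement to the already–known equivalence between the $P=W$ conjectures for $\mathrm{GL}_n$ and $\mathrm{PGL}_n$, and then transport it to the $\Gamma$-invariant cohomology via the identifications (\ref{Gamma1}) and (\ref{Gamma2}). By (\ref{Gamma1}), the $\Gamma$-invariant perverse filtration $P_\bullet H^*(\CM_{\mathrm{Dol}},\BQ)^\Gamma$ is the perverse filtration of the $\mathrm{PGL}_n$-Hitchin map $\hat\pi$, and by (\ref{Gamma2}) the restriction of the weight filtration to $H^*(\CM_B,\BQ)^\Gamma$ is the weight filtration on $H^*(\hat\CM_B,\BQ)$; both identifications come from the quotient maps and are compatible with the non-abelian Hodge correspondence for $\mathrm{PGL}_n$. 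Hence the asserted equality is exactly the $P=W$ statement $P_kH^*(\hat\CM_{\mathrm{Dol}},\BQ)=W_{2k}H^*(\hat\CM_B,\BQ)$ for $\mathrm{PGL}_n$, and it suffices to deduce this from the hypothesis that $P=W$ holds for $\mathrm{GL}_n$.

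For the implication $\mathrm{GL}_n\Rightarrow\mathrm{PGL}_n$ I would use the standard product presentation of the $\mathrm{GL}_n$-moduli spaces. On the Dolbeault side, sending $(\CE,\theta)$ to $(\det\CE,\mathrm{tr}\,\theta)$ exhibits the $\mathrm{GL}_n$-Higgs moduli space as the quotient of $T^*\mathrm{Pic}^0(C)\times\CM_{\mathrm{Dol}}$ by the free diagonal $\Gamma$-action (translation through $\Gamma\subset\mathrm{Pic}^0(C)$ on the first factor, tensoring on $\CM_{\mathrm{Dol}}$); on the Betti side, the determinant map exhibits the $\mathrm{GL}_n$-character variety as the quotient of $(\BC^*)^{2g}\times\CM_B$ by the corresponding free action of $\Gamma\subset\mu_n^{2g}$. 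These presentations are interchanged by non-abelian Hodge theory and are compatible with the Hitchin maps and with the trace/determinant fibrations. Since $\Gamma$ acts by translations on the connected groups $\mathrm{Pic}^0(C)$ and $(\BC^*)^{2g}$, hence trivially on their cohomology, taking $\Gamma$-invariants yields Künneth decompositions
\[
H^*(\CM_{\mathrm{Dol}}^{\mathrm{GL}_n},\BQ)\cong H^*(\mathrm{Pic}^0(C),\BQ)\otimes H^*(\CM_{\mathrm{Dol}},\BQ)^\Gamma,\qquad H^*(\CM_B^{\mathrm{GL}_n},\BQ)\cong H^*((\BC^*)^{2g},\BQ)\otimes H^*(\CM_B,\BQ)^\Gamma,
\]
in which the summand $1\otimes H^*(-)^\Gamma$ is the image of pullback along the projection to the $\mathrm{PGL}_n$-moduli space.

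It then remains to track the two filtrations through these decompositions. The rank-one Hitchin map $T^*\mathrm{Pic}^0(C)\to H^0(C,\Omega_C)$ is a trivial abelian fibration, so by the decomposition theorem its perverse filtration places $H^i(\mathrm{Pic}^0(C),\BQ)$ in perverse degree $i$; dually, $H^i((\BC^*)^{2g},\BQ)$ is pure of weight $2i$. Using the multiplicativity (convolution) of the perverse filtration under products of proper maps and the analogous convolution property of the weight filtration, together with the strictness of pullback along the projection to $\hat\CM_{\mathrm{Dol}}$, one checks that $P_kH^*(\CM_{\mathrm{Dol}}^{\mathrm{GL}_n},\BQ)$ cuts out on the summand $1\otimes H^*(\CM_{\mathrm{Dol}},\BQ)^\Gamma$ exactly $P_kH^*(\CM_{\mathrm{Dol}},\BQ)^\Gamma$, and likewise $W_{2k}$ cuts out $W_{2k}H^*(\CM_B,\BQ)^\Gamma$. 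As the two decompositions correspond under non-abelian Hodge theory, the hypothesis $P_kH^*(\CM_{\mathrm{Dol}}^{\mathrm{GL}_n},\BQ)=W_{2k}H^*(\CM_B^{\mathrm{GL}_n},\BQ)$ descends to the claimed equality.

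The only point that requires genuine care is the compatibility just invoked: that the perverse filtration of the $\mathrm{GL}_n$-Hitchin map is the convolution of those of $T^*\mathrm{Pic}^0(C)\to H^0(C,\Omega_C)$ and $\hat\pi$ after the free $\Gamma$-quotient, and that pullback along $\CM_{\mathrm{Dol}}^{\mathrm{GL}_n}\to\hat\CM_{\mathrm{Dol}}$ is strict for $P_\bullet$ — a statement I would verify through de Cataldo--Migliorini's hyperplane characterization (Theorem \ref{thm1.1}). Alternatively, and more briefly, one can bypass this analysis altogether by invoking directly the equivalence of the $P=W$ conjectures for $\mathrm{GL}_n$ and $\mathrm{PGL}_n$ on a fixed curve recorded in \cite{dCMS}, and then reading off the statement from (\ref{Gamma1}) and (\ref{Gamma2}).
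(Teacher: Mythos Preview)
Your proposal is correct and follows the paper's own route: the paper states Proposition~\ref{prop1.2} as the ``conclusion'' of Section~\ref{Section1.3}, \emph{i.e.}, as the immediate combination of the filtered identifications (\ref{Gamma1}) and (\ref{Gamma2}) with the equivalence of $P=W$ for $\mathrm{GL}_n$ and $\mathrm{PGL}_n$ from \cite{dCMS}. You additionally unpack that last equivalence via the K\"unneth decomposition coming from $\CM^{\mathrm{GL}_n}_{\mathrm{Dol}}\simeq\bigl(T^*\mathrm{Pic}^0(C)\times\CM_{\mathrm{Dol}}\bigr)/\Gamma$, which is exactly the argument in \cite{dCMS} that the paper is citing; your final sentence already observes that one may simply invoke that reference directly, which is what the paper does.
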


The following is a consequence of Proposition \ref{prop1.2} and \cite[Theorem 0.2]{dCMS}.

\begin{cor}\label{cor1.3}
When the curve $C$ has genus $g=2$, we have
\[
P_k  H^*(\CM_{\mathrm{Dol}}, \BQ)^\Gamma = W_{2k}H^*(\CM_B, \BQ)^\Gamma, \quad \forall k\geq 0.
\]
\end{cor}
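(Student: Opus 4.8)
The plan is to deduce Corollary \ref{cor1.3} by feeding the genus $2$ case of $P=W$ for $\mathrm{GL}_n$ into Proposition \ref{prop1.2}, so the argument is short and the only real work is checking that the hypotheses match. First I would invoke \cite[Theorem 0.2]{dCMS}, which establishes the full $P=W$ conjecture (\ref{P=W}) for $G=\mathrm{GL}_n$ on a curve of genus $2$ and arbitrary rank $n$. The one bookkeeping point is that the hypothesis of Proposition \ref{prop1.2} requires $P=W$ for $\mathrm{GL}_n$ together with the specific degree $d$ fixed in Section \ref{Section1} (the one with $\gcd(n,d)=1$); since for $\mathrm{GL}_n$ the pair $(\CM_{\mathrm{Dol}},\CM_B)$ together with its perverse and weight filtrations depends on $d$ only through $\gcd(n,d)$ --- the various coprime-degree moduli spaces being deformation equivalent and the non-abelian Hodge correspondence compatible with this --- the statement of \cite[Theorem 0.2]{dCMS} applies to our $d$.

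With this input in hand, the second step is purely formal: Proposition \ref{prop1.2} takes $P=W$ for $(C,\mathrm{GL}_n,d)$ as a black box and outputs the equality $P_k H^*(\CM_{\mathrm{Dol}}, \BQ)^\Gamma = W_{2k}H^*(\CM_B, \BQ)^\Gamma$ for all $k$. Under the hood this uses the canonical identifications (\ref{Gamma1}) and (\ref{Gamma2}) of the $\Gamma$-invariant parts with $H^*(\hat{\CM}_{\mathrm{Dol}}, \BQ)$ and $H^*(\hat{\CM}_B, \BQ)$, the compatibility of the first with the perverse filtration and of the second with the mixed Hodge structure, and the equivalence of $P=W$ for $\mathrm{GL}_n$ and for $\mathrm{PGL}_n$ on a fixed curve recorded in \cite{dCMS} (the paragraph following Theorem 0.2). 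So there is essentially nothing to prove beyond assembling these pieces, and one could even state Corollary \ref{cor1.3} as an immediate corollary with a one-line proof.

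Accordingly I do not anticipate a genuine obstacle: the substantive content --- both the genus $2$, $\mathrm{GL}_n$ case of $P=W$ and the reduction of the $\Gamma$-invariant part to the $\mathrm{PGL}_n$ picture --- has already been carried out, in \cite{dCMS} and in Section \ref{Section1} respectively. The only care needed is the normalization bookkeeping of the first paragraph (matching the degree $d$ and reconciling the $\mathrm{GL}_n$ versus $\mathrm{PGL}_n$ conventions), after which the conclusion follows by direct combination of Proposition \ref{prop1.2} with \cite[Theorem 0.2]{dCMS}.
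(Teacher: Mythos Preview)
Your proposal is correct and matches the paper's approach exactly: the paper states Corollary \ref{cor1.3} as an immediate consequence of Proposition \ref{prop1.2} combined with \cite[Theorem 0.2]{dCMS}, with no further argument given. Your additional bookkeeping about the degree $d$ is harmless and arguably makes the deduction cleaner than the paper's one-line remark.
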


\subsection{The variant cohomology}
In view of Proposition \ref{prop1.2} and Corollary \ref{cor1.3}, our main purpose of this paper is to understand the perverse filtration and the weight filtration on the variant cohomology
\[
H_{\mathrm{var}}^*(\CM_{\mathrm{Dol}}, \BQ)=H_{\mathrm{var}}^*(\CM_B, \BQ).
\]

\begin{prop}\label{prop1.4}
Let $p$ be the smallest prime divisor of $n$. We have 
\begin{equation}\label{upper}
P_{k-n(n-n/p)(g-1)}H_{\mathrm{var}}^k(\CM_{\mathrm{Dol}}, \BQ) = H_{\mathrm{var}}^k(\CM_{\mathrm{Dol}}, \BQ).
\end{equation}

\end{prop}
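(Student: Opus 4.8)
The plan is to bound the perverse amplitude of the variant cohomology by a support/dimension argument for the $\mathrm{SL}_n$-Hitchin map, exploiting the $\Gamma$-action and the structure of the generic fibers. Recall the generic Hitchin fiber: over the locus $\Lambda^{\mathrm{reg}} \subset \Lambda$ of smooth spectral curves $\widetilde C_a \to C$, the fiber $\pi^{-1}(a)$ of the $\mathrm{SL}_n$-Hitchin map is a torsor under the Prym variety $\mathrm{Prym}(\widetilde C_a/C)$, an abelian variety of dimension $\dim \Lambda = n(n-1)(g-1) + (n-1)(g-1) \cdot 0$... more precisely $\dim \mathrm{Prym} = (n^2-1)(g-1)$. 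The group $\Gamma = \mathrm{Pic}^0(C)[n]$ acts fiberwise by translation through its image in $\mathrm{Prym}(\widetilde C_a/C)[n]$, and this image has order $n^{2g}/|\ker|$; the key point is that $\Gamma$ acts on $\pi^{-1}(a)$ freely enough that on the quotient (the $\mathrm{PGL}_n$-fiber) the cohomology loses precisely the variant part. First I would isolate, fiberwise, the largest degree in which the variant part of $H^*(\pi^{-1}(a))$ can be nonzero; since $H^*(\pi^{-1}(a))$ is an exterior algebra on $H^1$, and the $\Gamma$-coinvariants computation of Gr\"ochenig--Wyss--Ziegler shows that the non-trivial $\Gamma$-isotypic pieces are supported on the spectral curves of the associated $\mu_p$-gerbe twisting, the variant classes appear only after one has used up enough of the $H^1$ that the complementary ``co-amplitude'' is $n(n-n/p)(g-1)$.

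The core of the argument is then to convert this fiberwise statement into a statement about the perverse filtration via the decomposition theorem. Concretely, I would use the $\Gamma$-equivariant decomposition of $R\pi_* \BQ$ into isotypic summands, $R\pi_*\BQ = (R\pi_*\BQ)^\Gamma \oplus (R\pi_*\BQ)_{\mathrm{var}}$, both of which are direct sums of shifted semisimple perverse sheaves on $\Lambda$ by the decomposition theorem. For the variant summand I want to show that its perverse cohomology sheaves ${}^{\mathfrak p}\!\mathcal H^j$ vanish for $j$ outside an interval whose length is $\dim \Lambda - n(n-n/p)(g-1)$; equivalently, the perverse filtration on $H^k_{\mathrm{var}}$ jumps to the full space already at step $k - n(n-n/p)(g-1)$, which is exactly (\ref{upper}). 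To pin down the top of the range I would identify the support of the relevant simple constituents. Following GWZ, the variant part of the direct image is governed by the Hitchin map for the endoscopic/twisted abelian fibration attached to the $p$-torsion, whose fibers are abelian varieties of dimension $n(n/p)(g-1) \cdot (\text{something})$; the relative dimension defect $n(n-n/p)(g-1)$ is precisely $\dim\mathrm{Prym} - \dim(\text{twisted Prym})$, and the supports of the variant summands live over a sublocus of $\Lambda$ of the complementary dimension, so the amplitude estimate follows from the standard bound (relative dimension plus codimension of support) on perverse cohomological amplitude of a proper pushforward.

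The main obstacle I anticipate is making the ``variant part is controlled by a lower-dimensional abelian fibration'' step precise and rigorous rather than heuristic: one must identify, canonically and globally over $\Lambda$ (not just over the elliptic/regular locus), which simple perverse summands of $R\pi_*\BQ$ carry non-trivial $\Gamma$-character, and bound both their supports and their relative amplitude. This is where I would lean hardest on \cite{GWZ}: their proof of the Hausel--Thaddeus topological mirror symmetry conjecture provides exactly the matching between the $\Gamma$-isotypic (stringy) cohomology of the $\mathrm{SL}_n$ side and the cohomology of the $\mathrm{PGL}_n$ side twisted by a gerbe, together with the required control over the Hitchin fibration restricted to the relevant strata. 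A secondary technical point is checking that the numeric bound $n(n-n/p)(g-1)$ comes out correctly; I would verify it by a direct computation of $\dim H^1(\mathrm{Prym})$ versus the rank of the sublattice fixed by the nontrivial characters, cross-checking against the known $\mathrm{SL}_2$ case where $p = n = 2$ and the bound reads $2(2-1)(g-1) = 2g-2$, matching \cite{dCHM1}.
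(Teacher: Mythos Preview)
Your geometric intuition is sound: the variant part of $R\pi_*\BQ$ is, morally, supported over the endoscopic locus, and this is what forces the perverse bound. But your proposed route --- analyze the simple perverse constituents via the decomposition theorem and then invoke \cite{GWZ} to pin down their supports and amplitudes --- is both far heavier than necessary and, as you yourself note, not easy to make precise. The result of \cite{GWZ} is a statement about $E$-polynomials obtained by $p$-adic integration; extracting from it a sheaf-level identification of which $IC$ summands carry nontrivial $\Gamma$-character, together with sharp dimension bounds on their supports, would require substantial extra work.

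The paper's argument bypasses all of this by combining two much more elementary inputs. First, the hyperplane-section description of the perverse filtration (Theorem~\ref{thm1.1}): to prove $P_{k-c}H^k_{\mathrm{var}}=H^k_{\mathrm{var}}$ with $c=n(n-n/p)(g-1)$, it suffices to show that the restriction $H^k_{\mathrm{var}}(\CM_{\mathrm{Dol}})\to H^k(\pi^{-1}(\Lambda'))$ vanishes for a general linear subspace $\Lambda'\subset\Lambda$ of dimension $c-1$. Second, the Hausel--Pauly result \cite{HP} that the endoscopic locus $\Lambda_{\mathrm{endo}}$ (where the Prym variety of the spectral cover is disconnected) has codimension exactly $c$, and that away from it the $\Gamma$-action on fiber cohomology is trivial. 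Since a general $(c-1)$-dimensional $\Lambda'$ misses $\Lambda_{\mathrm{endo}}$, one gets $H^k_{\mathrm{var}}(\pi^{-1}(\Lambda'))=0$; the restriction map is $\Gamma$-equivariant, so it kills the variant part. That is the whole proof.

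So the gap in your proposal is not a wrong idea but a missing shortcut: you never invoke the de Cataldo--Migliorini hyperplane characterization, which turns a potentially delicate support/amplitude analysis into a one-line vanishing statement, and you reach for \cite{GWZ} where the older and more elementary \cite{HP} already gives exactly the needed codimension $n(n-n/p)(g-1)$ and the triviality of the $\Gamma$-action off the endoscopic locus.
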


\begin{proof}
The argument here is a generalization of the first part of the proof of \cite[Theorem 4.4.6]{dCHM1} which treated the case $n=2$. Here we apply results of Hausel--Pauly \cite{HP} and Theorem \ref{thm1.1}. 

Let $\Lambda' \subset \Lambda$ be a general linear subspace of dimension
\begin{equation}\label{eqn9}
\mathrm{dim}(\Lambda') = n(n-n/p)(g-1) -1.
\end{equation}
Assume $\CM_{\Lambda'} = \pi^{-1}(\Lambda') \subset \CM_{\mathrm{Dol}}$. In order to prove (\ref{upper}), by Theorem \ref{thm1.1} it suffices to show 
\begin{equation}\label{111}
r\left(H_{\mathrm{var}}^k(\CM_{\mathrm{Dol}}, \BQ ) \right) = 0
\end{equation}
where $r$ is the restriction morphism
\begin{equation}\label{res}
r: H^k(\CM_{\mathrm{Dol}}, \BQ) \rightarrow H^k(\CM_{\Lambda'}, \BQ).
\end{equation}

We consider the endoscopic loci $\Lambda_{\mathrm{endo}} \subset \Lambda$ defined in \cite[Corollary 1.3]{HP}, which is formed by $a \in \Lambda$ such that the Prym variety $\mathrm{Prym}(C_a/C)$ associated with the corresponding spectral curve $C_a$ is not connected. By \cite[Lemma 7.1]{HP}, we have
\begin{equation}\label{eqn11}
\mathrm{codim}_{\Lambda}(\Lambda_{\mathrm{endo}}) = n(n-n/p)(g-1).
\end{equation}
Since $\Lambda'$ is general, it is completely contained in $\Lambda \smallsetminus \Lambda_{\mathrm{endo}}$ by (\ref{eqn9}) and (\ref{eqn11}). An identical argument as in the first paragraph of \cite[Proof of Theorem 1.4]{HP} implies that $\Gamma$ acts trivially on $H^k(\CM_{\Lambda'}, \BQ)$, \emph{i.e.},
\[
H_{\mathrm{var}}^k(\CM_{\Lambda'}, \BQ)) = 0.
\]

On the other hand, the $\Gamma$-action is fiberwise with respect to the Hitchin map $\pi: \CM_{\mathrm{Dol}} \to \Lambda$, and the restriction morphism (\ref{res}) is $\Gamma$-equivariant. In particular, we see that
\[
r\left(H_{\mathrm{var}}^k(\CM_{\mathrm{Dol}}, \BQ)\right) \subset H_{\mathrm{var}}^k(\CM_{\Lambda'}, \BQ)) = 0.
\]
This completes the proof of (\ref{111}).
\end{proof}


\subsection{Main results}\label{Section1.5}

The following theorem is our main result, which generalizes \cite[Theorems 4.4.6 and 4.4.7]{dCHM1} for $n=2$. It computes the perverse filtration and the weight filtration explicitly on the variant cohomology for $\mathrm{SL}_n$ with $n$ a prime number.

\begin{thm}\label{thm1.5}
Assume $n$ is a prime number, and assume
\[
c_n := n(n-1)(g-1).
\] 
\begin{enumerate}
    \item[(a)] We have
\[
0 = P_{k-c_n-1}H_{\mathrm{var}}^k(\CM_{\mathrm{Dol}}, \BQ) \subset P_{k-c_n}H_{\mathrm{var}}^k(\CM_{\mathrm{Dol}}, \BQ) = H_{\mathrm{var}}^k(\CM_{\mathrm{Dol}}, \BQ).
\]
   \item[(b)] We have
\[
0 = W_{2(k-c_n)-1}H_{\mathrm{var}}^k(\CM_{B}, \BQ) \subset W_{2(k-c_n)}H_{\mathrm{var}}^k(\CM_{B}, \BQ) = H_{\mathrm{var}}^k(\CM_{B}, \BQ).
\]
\end{enumerate}

\end{thm}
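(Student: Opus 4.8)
The plan is to establish (a) first and then transfer it to (b) via an $E$-polynomial/weight-count comparison, exactly paralleling the structure of the $n=2$ argument in \cite{dCHM1} but feeding in the topological mirror symmetry input of \cite{GWZ}. For part (a), one inclusion is already in hand: Proposition \ref{prop1.4}, applied with $p=n$ (so $n-n/p = n-1$ and $n(n-n/p)(g-1) = c_n$), gives $P_{k-c_n}H^k_{\mathrm{var}}(\CM_{\mathrm{Dol}},\BQ) = H^k_{\mathrm{var}}(\CM_{\mathrm{Dol}},\BQ)$. So the content of (a) is the \emph{lower} bound: $P_{k-c_n-1}H^k_{\mathrm{var}} = 0$, i.e. the variant cohomology is concentrated in a single perversity degree $k-c_n$. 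I would prove this by computing the graded pieces $\mathrm{Gr}^P_\bullet H^*_{\mathrm{var}}$ explicitly. Via the decomposition theorem for $\pi$, the variant part of $R\pi_*\BQ$ is governed, over the complement of the endoscopic locus $\Lambda_{\mathrm{endo}}$, by the cohomology of the (disconnected) Pryms; the key numerical fact is that by \cite{HP} the endoscopic contributions live in codimension exactly $c_n$, which forces the variant local systems to sit in perverse degree $\dim\Lambda + (\text{shift by } c_n)$, and a direct dimension count (or the characterization in Theorem \ref{thm1.1}: $\mathrm{Gr}^P_i H^{i+k}$ is detected on $\pi^{-1}(\Lambda^{i})$ but not $\pi^{-1}(\Lambda^{i-1})$) then pins the variant classes to the single step $i = k - c_n$. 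Concretely: restricting to $\pi^{-1}(\Lambda')$ with $\dim\Lambda' = c_n - 1$ kills everything (Proposition \ref{prop1.4}), while restricting to a general $\Lambda''$ of dimension $c_n$ meets $\Lambda_{\mathrm{endo}}$ and the variant cohomology injects; combining the two restriction statements with Theorem \ref{thm1.1} yields both vanishing and non-vanishing at exactly $i = k - c_n$.

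For part (b), the strategy is to avoid computing the weight filtration of $\CM_B$ directly and instead argue by a global consistency check. By the non-abelian Hodge identification (\ref{NAH}) the underlying vector spaces agree, $H^k_{\mathrm{var}}(\CM_{\mathrm{Dol}},\BQ) = H^k_{\mathrm{var}}(\CM_B,\BQ)$, so part (a) already tells us the total dimension $\sum_k \dim H^k_{\mathrm{var}}$ in each cohomological degree, together with the fact that on the Dolbeault side all of it is perversity $k-c_n$. On the Betti side, the recent proof \cite{GWZ} of the Hausel--Thaddeus topological mirror conjecture \cite{HT}, refined to the level of mixed Hodge structures (equivalently, the stringy $E$-polynomial identities), together with the $E$-polynomial computations of \cite{HRV, SL}, should give enough information to determine the weight polynomial $W_{\mathrm{var}}(\CM_B)(q,t) = \sum \dim \mathrm{Gr}^W_j H^k_{\mathrm{var}}\, q^{j/2} t^k$. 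The $P=W$ normalization predicts that this weight polynomial, once we know from (a) that the perverse polynomial is a single monomial in the perverse variable per cohomological degree, must itself be a single power $q^{k-c_n}$ times $\dim H^k_{\mathrm{var}}$ — i.e. $H^k_{\mathrm{var}}(\CM_B,\BQ)$ is pure of weight $2(k-c_n)$. I would verify this purity claim by reading it off the $E$-polynomial: the variant part of the character variety, being controlled by the Prym of a connected spectral curve over the generic point, is an iterated torus-type / abelian-variety-type contribution whose mixed Hodge structure is forced to be of Hodge--Tate type with the single weight dictated by the endoscopic codimension $c_n$; the curious hard Lefschetz / perverse-Poincaré symmetry established in the relevant references then upgrades "concentrated in one weight" to "that weight is $2(k-c_n)$." Since on both sides the relevant filtration is concentrated in a single step, matching them reduces to matching the one step, i.e. to the numerical identity between the perverse degree $k - c_n$ and half the weight degree $k - c_n$, which holds by construction.

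The main obstacle, I expect, is part (b): specifically, extracting from \cite{GWZ, HT} the statement at the level of \emph{weight-graded} pieces (not merely $E$-polynomials or point counts over finite fields) that $H^*_{\mathrm{var}}(\CM_B)$ is of pure, Hodge--Tate type with the predicted single weight. Point-count / $E$-polynomial data a priori only controls the alternating sum $\sum_j (-1)^? \dim \mathrm{Gr}^W_j$, so one must supplement it — most naturally with the known purity results for twisted character varieties (the weight filtration on $H^*(\CM_B)$ is known to be "pure" in the sense that $H^k$ has weights only in a restricted range, and for the variant part the range should collapse) or with a direct geometric description of the variant cohomology via the Hitchin-side Prym fibration transported through non-abelian Hodge. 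If that weight-purity input is available cleanly, the rest is bookkeeping: one assembles (a), the NAH identification of underlying spaces, and the purity statement into the chain of equalities $P_{k-c_n}H^k_{\mathrm{var},\mathrm{Dol}} = H^k_{\mathrm{var}} = W_{2(k-c_n)}H^k_{\mathrm{var},B}$ with both neighboring steps vanishing, which is precisely the assertion of Theorem \ref{thm1.5}(a),(b). A secondary technical point to handle carefully is the behavior over the endoscopic locus itself: Proposition \ref{prop1.4} only uses the complement of $\Lambda_{\mathrm{endo}}$, so for the sharp lower bound in (a) I must make sure the decomposition-theorem summands supported on $\Lambda_{\mathrm{endo}}$ (which do contribute to the variant cohomology) all carry the expected perverse shift — this is where the exact codimension value $c_n$ from \cite[Lemma 7.1]{HP}, rather than merely a bound, is essential.
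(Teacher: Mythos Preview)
Your high-level strategy is on target, but there is a genuine gap in your plan for (a), and the paper's actual route differs from what you propose.

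For part (a), you correctly extract the upper bound $P_{k-c_n}H^k_{\mathrm{var}} = H^k_{\mathrm{var}}$ from Proposition \ref{prop1.4}. The gap is in your lower bound: you assert that restriction to $\pi^{-1}(\Lambda'')$ with $\dim\Lambda'' = c_n$ is \emph{injective} on $H^*_{\mathrm{var}}$, because $\Lambda''$ meets $\Lambda_{\mathrm{endo}}$. But meeting $\Lambda_{\mathrm{endo}}$ in finitely many points does not by itself give injectivity; to make this work you would need a support-theorem type statement that every variant perverse summand of $R\pi_*\BQ$ has support \emph{equal to} a component of $\Lambda_{\mathrm{endo}}$ (not merely contained in it) and contributes in the single expected perverse degree. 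That is plausible and is essentially what later endoscopic-decomposition results establish, but it is not available from the ingredients you cite, and you flag it yourself as a ``secondary technical point'' without resolving it.

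The paper sidesteps this entirely. Instead of proving injectivity geometrically, it sets $v_P^{i,j} = \dim\mathrm{Gr}^P_i H^{i+j}_{\mathrm{var}}$ and applies a purely numerical criterion (Proposition \ref{prop2.1}): given (i) the upper bound $v_P^{i,j}=0$ for $j<c_n$ from Proposition \ref{prop1.4}, (ii) the Relative Hard Lefschetz symmetry $v_P^{m-i,j}=v_P^{m+i,j}$ with $m=\dim\Lambda$, and (iii) the symmetry $\dim H^{m+c_n-l}_{\mathrm{var}} = \dim H^{m+c_n+l}_{\mathrm{var}}$ of the total Betti numbers, an elementary induction forces $v_P^{i,j}=0$ for all $j>c_n$ as well. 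The crucial input for (iii) is the palindromic identity (\ref{E11}) for the polynomial $E(q)$ encoding $\dim H^*_{\mathrm{var}}$, which comes from the Gr\"ochenig--Wyss--Ziegler proof of topological mirror symmetry via Corollary \ref{cor3.2}. So \cite{GWZ} is already used in (a), not only in (b) as you suggest.

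For part (b) your diagnosis of the obstacle is correct: $E$-polynomials a priori see only alternating sums of weight-graded dimensions. The paper resolves this in two steps. First, Proposition \ref{prop3.5} computes $E_{\mathrm{var}}(\CM_B;u,v)$ from \cite{HRV,SL} and matches it with $E(uv)$; a parity argument (Lemma \ref{lem3.6}: comparing $E_{\mathrm{var}}(\CM_B;q,q)$ with $E(q^2)$ and specializing $q^2=-1$) shows $\dim\mathrm{Gr}^W_{2i'}H^j_{\mathrm{var},c}=0$ whenever $i'+j$ is odd, so there is no cancellation and the $E$-polynomial determines the individual Hodge numbers, which are then of Hodge--Tate type. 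Second, these numbers are fed into a companion numerical criterion (Proposition \ref{prop2.2}), where the required symmetry comes from Corollary \ref{cor3.3}, the hard Lefschetz isomorphism $\eta^i: H^{m+c_n-i}_{\mathrm{var}}\xrightarrow{\sim}H^{m+c_n+i}_{\mathrm{var}}$, which is itself a consequence of part (a). So (b) depends on (a), and both parts are ultimately driven by the combinatorial Propositions \ref{prop2.1} and \ref{prop2.2} rather than by a direct geometric analysis over the endoscopic locus.
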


We prove Theorem \ref{thm1.5} in Section \ref{Section3}. It is clear that Theorem \ref{thm1.5} implies Theorem \ref{thm0.2}. Hence we complete the proof of Theorem \ref{thm0.1} by combining Corollary \ref{cor1.3}. 

\begin{rmk}
Proposition \ref{prop1.2} and Theorem \ref{thm1.5} combined shows that, when $n$ is a prime number, the $P=W$ conjecture for $\mathrm{SL}_n$ is equivalent to the $P=W$ conjecture for $\mathrm{GL}_n$.
\end{rmk}

For general $n$, the perverse filtration on the variant cohomology $H_{\mathrm{var}}^k(\CM_{\mathrm{Dol}}, \BQ)$ for the $\mathrm{SL}_n$-Hitchin moduli space $\CM_{\mathrm{Dol}}$ is expected to be more complicated. In view of \cite{HT}, the variant cohomology is governed by the Hitchin moduli spaces of endoscopic groups attached to irreducible non-trivial characters of $\Gamma = \mathrm{Pic}^0(C)[n]$. These endoscopic moduli spaces are further related to the $\mathrm{GL}_{n/d}$-Hitchin moduli space associated with a curve $\widetilde{C}$ given by a degree $d$ Galois cover of $C$, where $d$ runs through \emph{all} divisors of $n$. We will discuss this in a future paper.

In particular, when $n$ is prime, the relevant endoscopic Higgs bundles are of rank 1, with the corresponding moduli space the total cotangent bundle of a Prym variety. Therefore the associated perverse filtrations are trivial. This is the heuristic reason that the perverse filtrations on the variant cohomology are of the form Theorem \ref{thm1.5} (a).

\section{Decompositions of vector spaces}\label{Section2}

\subsection{$k$-sequeces} 
We consider double indexed sequences
\begin{equation}\label{dec}
\{ v_{i,j} \in \BN\}_{i,j}
\end{equation}
satisfying $v^{i,j}=0$ when $i<0$ or $j<0$. For convenience, we assume that all indices are non-negative integers.

We say that (\ref{dec}) is a \emph{$k$-sequence} if $v^{i,j} =0$ when $j\neq k$. The purpose of Section \ref{Section2} is to give two critera for $k$-sequences.

\subsection{The first criterion}
\begin{prop}\label{prop2.1}
For fixed $m,k \in \BN_{>0}$, we assume that (\ref{dec}) satisfies the following conditions:
\begin{enumerate}
    \item[(i)] $v^{i,j} =0$ if $j <k$;
    \item[(ii)] $v^{m-i,j}= v^{m+i, j}$ for any $i, j$;
    \item[(iii)] The following identify holds for any $l \geq 0$, \[
    \sum_{i+j = m+k-l} v^{i,j} = \sum_{i+j=m+k+l} v^{i,j}.
    \]
\end{enumerate}
Then (\ref{dec}) is a $k$-sequence.
\end{prop}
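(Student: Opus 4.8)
The plan is to extract from the symmetries (ii) and (iii) two first-moment identities and then play them against condition (i) and the non-negativity of the $v^{i,j}$. As a preliminary step I would observe that the sequence is automatically finitely supported, which is what makes the moment sums below meaningful: condition (i) forces $j\ge k$ on the support, and applying (iii) with $l>m$ the left-hand sum $\sum_{i+j=m+k-l}v^{i,j}$ is empty (a nonzero term would need $i+j=m+k-l<k\le j$, impossible for $i\ge 0$), so the right-hand sum vanishes and, by non-negativity, $v^{i,j}=0$ whenever $i+j>2m+k$. Hence the support lies in the finite set $\{(i,j):0\le i,\ k\le j,\ i+j\le 2m+k\}$; in particular $i\le 2m$ there, so the substitution $i\mapsto 2m-i$ keeps indices non-negative.

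Now set $T=\sum_{i,j}v^{i,j}$. Using (ii) in the form $v^{2m-i,j}=v^{i,j}$ and then reindexing $i\mapsto 2m-i$, I get $\sum_{i,j}i\,v^{i,j}=\sum_{i,j}(2m-i)\,v^{i,j}=2mT-\sum_{i,j}i\,v^{i,j}$, hence $\sum_{i,j}i\,v^{i,j}=mT$. Writing $D_s=\sum_{i+j=s}v^{i,j}$, condition (iii) says exactly that $D_s=D_{2(m+k)-s}$, so the same reindexing gives $\sum_s s\,D_s=(m+k)T$; since $\sum_s s\,D_s=\sum_{i,j}(i+j)\,v^{i,j}$, subtracting the previous identity yields $\sum_{i,j}j\,v^{i,j}=kT$.

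Finally I would combine this with (i): on the support every $v^{i,j}$ satisfies $j\ge k$, so $\sum_{i,j}j\,v^{i,j}\ge k\sum_{i,j}v^{i,j}=kT$, with equality if and only if $v^{i,j}=0$ for all $j\ne k$. The identity just proved forces equality, so $\{v^{i,j}\}$ is a $k$-sequence. The computation is short and I do not anticipate a genuine obstacle; the only points that need care are the finite-support observation and the non-negativity of the reindexed indices, both of which follow for free from (i) and (iii).
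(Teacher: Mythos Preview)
Your argument is correct and complete. The finite-support step is sound: condition~(i) forces $j\ge k$ on the support, and then (iii) with $l>m$ kills the diagonal $i+j=m+k+l>2m+k$, so all sums are finite; the symmetry $v^{i,j}=v^{2m-i,j}$ (equivalent to (ii)) then gives the first moment $\sum_{i,j}i\,v^{i,j}=mT$, the diagonal symmetry from (iii) gives $\sum_{i,j}(i+j)\,v^{i,j}=(m+k)T$, and subtracting yields $\sum_{i,j}j\,v^{i,j}=kT$, which together with $j\ge k$ on the support forces $v^{i,j}=0$ for $j\neq k$.

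This is a genuinely different route from the paper. The paper argues by induction on the diagonal $i+j=d_0$: for each $j>k$ it uses (ii) to reflect $v^{d_0-j,j}$ to $v^{2m-d_0+j,j}$, then compares the two diagonals $i+j=2m-d_0+2j$ and $i+j=d_0-2j+2k$ via (iii), and applies the induction hypothesis to the shorter diagonal to squeeze $v^{d_0-j,j}$ down to zero. Your moment argument bypasses the induction entirely by extracting a single global identity from each of the two symmetries and letting non-negativity do the rest; it is shorter and more conceptual. The paper's inductive scheme, on the other hand, is more flexible in that it adapts to the companion criterion (the paper's Proposition~2.2), where the hypotheses are not two reflection symmetries and a direct moment computation is less immediate.
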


\begin{proof}
By (i), it suffices to show that 
\begin{equation}\label{eqn13}
v^{i,j}=0, \quad \mathrm{if}~~ k<j. 
\end{equation}
We prove this by induction on the value $i+j$. The induction base is the case $i+j=k$ where (\ref{eqn13}) is clearly true.

We now assume that (\ref{eqn13}) holds if $i+j<d_0$. To complete the induction, we need to show that $v^{d_0-j,j} =0$ for $k<j$. The condition (ii) implies that $v^{d_0-j,j} = v^{2m-d_0+j,j}$. On the other hand, by (iii), we have
\begin{equation}\label{eqn14}
v^{2m-d_0+j,j}+v^{2m-d_0+2j-k,k} \leq \sum_{i+j=2m-d_0+2j} v^{i,j} =  \sum_{i+j=d_0-2j+2k} v^{i,j} = v^{d_0-2j+k,k}
\end{equation}
where we apply the induction assumption in the last equation (since $d_0-2j+2k<d_0$). We deduce from (\ref{eqn14}) and (ii) that 
\[
v^{2m-d_0+j,j} \leq v^{d_0-2j+k,k} - v^{2m-d_0+2j-k,k} =0. 
\]
Hence we have $v^{d_0-j,j} = v^{2m-d_0+j,j} = 0$ which completes the induction.
\end{proof}

\subsection{The second criterion}
\begin{prop}\label{prop2.2}
For fixed $m,k \in \BN_{>0}$, we assume that (\ref{dec})  satisfies the following conditions:
\begin{enumerate}
    \item[(i)] $v^{i,j} = v^{2m+2k-i-2j,j}$ for any $i,j$.
    \item[(ii)] The following identity holds for any $l\geq 0$,
    \[
    \sum_{i+j = k+l} v^{i,j} = \sum_{j} v^{l,j}.
    \]
    \item[(iii)] The following identify holds for any $i \geq 0$, 
    \[
    \sum_{j} v^{m+i,j} = \sum_{j} v^{m-i,j}.
    \]
\end{enumerate}
Then (\ref{dec}) is a $k$-sequence.
\end{prop}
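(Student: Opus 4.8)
The plan is to encode the sequence $(\ref{dec})$ in the two-variable generating polynomial
\[
F(x,y)=\sum_{i,j} v^{i,j}\, x^i y^j=\sum_j P_j(x)\, y^j,\qquad P_j(x):=\sum_i v^{i,j}\,x^i,
\]
where each $P_j$ has \emph{nonnegative} coefficients, and to translate the three hypotheses into functional equations for $F$. First I would note that condition (i) already forces $v^{i,j}$ to have finite support (it makes $v^{i,j}$ vanish for $j>m+k$ and, for $j\le m+k$, for $i>2(m+k-j)$), so $F$ is a genuine polynomial. Then: condition (i) is exactly the identity $F(x,y)=x^{2(m+k)}F\big(1/x,\,y/x^2\big)$; condition (iii) says $g(x):=F(x,1)$ is palindromic about $m$, i.e. $g(x)=x^{2m}g(1/x)$ with $\deg g\le 2m$; and condition (ii) says $[t^{k+l}]F(t,t)=[t^{l}]g(t)$ for all $l\ge 0$.

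Next I would extract two clean identities. Substituting $y=x^2$ into the identity from (i) and then using the palindromicity of $g$ gives
\[
F(x,x^2)=x^{2(m+k)}F(1/x,1)=x^{2(m+k)}g(1/x)=x^{2k}g(x).
\]
Substituting $y=x$ into the identity from (i) shows $h(t):=F(t,t)$ is palindromic about $m+k$ with $\deg h\le 2(m+k)$; combined with condition (ii), which determines $[t^r]h=[t^{r-k}]g$ for $r\ge k$, and with the vanishing $[t^a]g=0$ for $a>2m$, the palindromicity of $h$ forces $[t^r]h=0$ for $r<k$ as well. Hence $F(t,t)=t^k g(t)$ as polynomials.

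With these identities the conclusion is quick. Fix any real $x\in(0,1)$ and put $c_j:=P_j(x)\ge 0$. Evaluating $F(x,1)=g(x)$, $F(x,x)=x^k g(x)$, $F(x,x^2)=x^{2k}g(x)$ gives
\[
\sum_j c_j\,(x^j-x^k)^2=\sum_j c_j x^{2j}-2x^k\sum_j c_j x^j+x^{2k}\sum_j c_j=\big(x^{2k}-2x^{2k}+x^{2k}\big)g(x)=0.
\]
Every summand is nonnegative, so $c_j(x^j-x^k)^2=0$ for each $j$; since $x^j\neq x^k$ for $j\neq k$ when $0<x<1$, this forces $P_j(x)=0$. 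As this holds for infinitely many $x$, $P_j\equiv 0$ for all $j\neq k$, i.e. $v^{i,j}=0$ whenever $j\neq k$, which is the claim.

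I expect the main obstacle to be the bookkeeping of the middle step: condition (ii) by itself only matches $F(t,t)$ with $t^k g(t)$ in degrees $\ge k$, and one must feed in the palindromicity coming from (i) together with $\deg g\le 2m$ to exclude low-degree correction terms and obtain the \emph{exact} identity $F(t,t)=t^k g(t)$ — without it the final nonnegativity identity acquires an error term and the argument collapses. The closing step, recognizing that the two ``moment'' identities $F(x,x)=x^k g(x)$ and $F(x,x^2)=x^{2k}g(x)$ force all the mass of $(c_j)$ to concentrate at $j=k$, is the key idea but is short once the generating-function setup is in place.
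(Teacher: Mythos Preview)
Your proof is correct and takes a genuinely different route from the paper's. The paper argues by induction on the total degree $i+j$: it first shows $v^{i,j}=0$ for $i+j\le k$ and $j\ne k$ using (i) and (iii), and then in the inductive step treats the cases $j>k$ and $j<k$ separately, in each case combining (ii) (and, for $j<k$, the reflection (i)) with the induction hypothesis to squeeze out $v^{d_0-j,j}=0$. Your argument instead packages everything into the generating polynomial $F(x,y)$, extracts the three exact identities $F(x,1)=g(x)$, $F(x,x)=x^kg(x)$, $F(x,x^2)=x^{2k}g(x)$, and finishes with the single nonnegativity identity $\sum_j P_j(x)(x^j-x^k)^2=0$. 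Your handling of the only delicate point --- upgrading condition (ii) to the \emph{exact} equality $F(t,t)=t^kg(t)$ by combining the palindromicity $h(t)=t^{2(m+k)}h(1/t)$ from (i) with $\deg g\le 2m$ from (iii) --- is correct: for $r<k$ one gets $[t^r]h=[t^{2(m+k)-r}]h=[t^{2m+k-r}]g=0$. The paper's induction is more elementary and self-contained; your moment/variance argument is shorter once the generating-function identities are in place, and it makes transparent why exactly three conditions are needed (zeroth, first, and second ``moments'' in $y$ pin the mass at $j=k$).
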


\begin{proof}
We prove that 
\begin{equation}\label{18}
v^{i,j} =0,\quad \mathrm{if}~~~k\neq j
\end{equation}
by induction on the value $i+j$. 

If $i+j\leq k$ and $j<k$, we have $v^{i,j} = v^{2m+2k-i-2j,j}$ by (i). Then (iii) implies that 
\[
v^{2m+2k-i-2j,j}\leq \sum_{l}v^{2m+2k-i-2j,l}= \sum_{l} v^{i+2j-2k,l} = 0,
\]
since $i+2j-2k<0$. Hence $v^{i,j}=0$ if $i+j<k$, and $v^{i,k-i}=0$ if $i>0$. This provides the induction base.

Now assume that (\ref{18}) holds if $i+j<d_0$. We first show that $v^{d_0-j,j}=0$ if $j>k$. In fact, by (ii) we have
\begin{equation}\label{112}
v^{d_0-j,j} + v^{d_0-j,k} \leq \sum_{j'} v^{d_0-j,j'} = \sum_{i_1+i_2 = k+(d_0-j)} v^{i_1,i_2}.
\end{equation}
Then, since $k+(d_0-j)<d_0$, the induction assumption further implies
\begin{equation}\label{113}
\sum_{i_1+i_2 = k+(d_0-j)} v^{i_1,i_2} = v^{d_0-j,k}.
\end{equation}
Combining (\ref{112}) and (\ref{113}), we have $v^{d_0-j,j} = 0$ if $j>k$. 

It remains to show that $v^{d_0-j,j}=0$ if $j<k$. In this case, we have
\[
v^{d_0-j,j} = v^{2m+2k-d_0-j,j}
\]
by (i). The condition (ii) further implies that
\begin{equation}\label{115}
v^{2m+2k-d_0-j,j}+ v^{2m+2k-d_0-j,k} \leq  \sum_{j'}v^{2m+2k-d_0-j,j'} = \sum_{i_1+i_2 = 2m+3k-d_0-j} v^{i_1,i_2}.
\end{equation}
For $i_1+i_2=2m+3k-d_0-j$, we have by (i) that $v^{i_1,i_2} = v^{j_1,j_2}$ with
\[
j_1 +j_2 = 2m+2k-(2m+3k-d_0-j) =d_0+j-k<d_0.
\]
Hence (i) and the induction assumption yield
\begin{equation}\label{116}
    \sum_{i_1+i_2 = 2m+3k-d_0-j} v^{i_1,i_2} = v^{2m+2k-d_0-j,k}.
\end{equation}
Combining (\ref{115}) and (\ref{116}), we obtain
\[
v^{d_0-j,j} = v^{2m+2k-d_0-j,j} =0
\]
which completes the induction.
\end{proof}

\section{Perverse filtrations and weight filtrations}\label{Section3}

Throughout the section, we assume that $n$ is a prime number, and complete the proof of Theorem \ref{thm1.5}. For the proof, we apply the numerical criteria of Section \ref{Section2} combined with the following ingredients:

\begin{enumerate}
    \item[(a)] Hausel-Thaddeus' topological mirror symmetry conjecture for Hitchin systems \cite{HT}, and its recent proof by Gr\"ocheneg--Wyss--Ziegler \cite{GWZ}.
    \item[(b)] The $E$-polynomials of character varieties calculated by Hausel-- Rodriguez-Villegas \cite{HRV} and Mereb \cite{SL} via point counting over finite fields.
\end{enumerate}

\subsection{The topological mirror symmetry conjecture}

Recall that the virtual Hodge polynomial $H(X; u,v)$ of an algebraic variety $X$ is
\[
H(X; t,u,v) =\sum_{i,j,k} h^{j,k}\left( \mathrm{Gr}^W_{j+k} H_c^{i}(X, \BC) \right) t^iu^jv^k
\]
where $\mathrm{Gr}_*^W$ is the graded piece with respect to the weight filtration. The $E$-polynomial of $X$ is the specialization
\[
E(X; u,v) = H(X; -1, u,v).
\]

The topological mirror symmetry conjecture proposed by Hausel--Thaddeus \cite{HT} relates the $E$-polynomial of the $\mathrm{SL}_n$-Hitchin moduli space $\CM_{\mathrm{Dol}}$ to the \emph{stringy} $E$-polynomial of the $\mathrm{PGL}_n$-Hitchin moduli space $\hat{\CM}_{\mathrm{Dol}}$. A generalized version of the Hausel--Thaddeus conjecture was proven by Gr\"ocheneg--Wyss--Ziegler \cite{GWZ} via the method of $p$-adic integrations; see also \cite{GWZ2}.

When $n$ is a prime number, we obtain the following closed formula for the $E$-polynomial of the variant cohomology of $\CM_{\mathrm{Dol}}$ from a direct calculation of the stringy $E$-polynomial of the $\mathrm{PGL}_n$-Hitchin moduli space $\hat{\CM}_{\mathrm{Dol}}$; see \cite[Proposition 8.2]{HT}.

\begin{prop}[Topological mirror symmetry \cite{HT, GWZ}]
Let $n$ be a prime number. Then we have
\begin{multline} \label{23}
   E(\CM_{\mathrm{Dol}}; u,v)-E(\hat{\CM}_{\mathrm{Dol}};u,v) = \frac{n^{2g}-1}{n}(uv)^{(n^2-1)(g-1)} \Big{(} \big{(}(u-1)(v-1)\big{)}^{(n-1)(g-1)} - \\
\Big{(}(1+u+\dots+u^{n-1})(1+v+\dots +v^{n-1})\Big{)}^{(g-1)}\Big{)}.
\end{multline}
\end{prop}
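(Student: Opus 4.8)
The plan is to deduce the displayed formula directly from the Hausel--Thaddeus topological mirror symmetry conjecture, in the form proven by Gr\"ochenig--Wyss--Ziegler, by specializing their identity of stringy $E$-polynomials and extracting the contribution of the non-trivial characters of $\Gamma$. First I would recall that the $\Gamma$-action on $\CM_{\mathrm{Dol}}$ has quotient stack $\hat{\CM}_{\mathrm{Dol}}$, so that the $E$-polynomial decomposes along the $\Gamma$-isotypic pieces, and the trivial isotypic piece contributes exactly $E(\hat{\CM}_{\mathrm{Dol}}; u,v)$ by \eqref{Gamma2}-type reasoning; hence the left-hand side $E(\CM_{\mathrm{Dol}};u,v) - E(\hat{\CM}_{\mathrm{Dol}};u,v)$ is precisely the $E$-polynomial of the variant cohomology. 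The right-hand side is then what the mirror symmetry statement predicts for this variant part: the stringy correction term recording the twisted/inertia contributions coming from the $(\BZ/n\BZ)^{2g}$-gerbe structure.

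Next I would carry out the actual computation of that correction term following \cite[Section 8]{HT}, specialized to $n$ prime. For $n$ prime, every non-trivial element $\gamma \in \Gamma$ generates a copy of $\BZ/n\BZ$, and the associated fixed-point / endoscopic locus is governed by a rank-$1$ (abelian) Hitchin system, namely the cotangent bundle of a Prym variety attached to the degree-$n$ cyclic cover of $C$ determined by $\gamma$. There are $n^{2g}-1$ such non-trivial elements, naturally grouped into $(n^{2g}-1)/(n-1)$ cyclic subgroups of order $n$; the factor $\frac{n^{2g}-1}{n}$ in front is exactly $\frac{1}{n-1}\cdot\frac{(n-1)(n^{2g}-1)}{n\cdot ?}$—more precisely it arises as $(n^{2g}-1)\cdot\frac{n-1}{n(n-1)}$ after summing the equal contributions of the $n-1$ non-trivial characters on each line and normalizing by the order of $\Gamma$ in the stringy formula. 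The factor $(uv)^{(n^2-1)(g-1)}$ is the Tate twist by half the dimension of $\CM_{\mathrm{Dol}}$, coming from the fact that the relevant cohomology is pure of a single weight on the support of the endoscopic locus. The two competing terms $\big((u-1)(v-1)\big)^{(n-1)(g-1)}$ and $\big((1+u+\dots+u^{n-1})(1+v+\dots+v^{n-1})\big)^{(g-1)}$ are, respectively, the $E$-polynomial of the fiber of the rank-$1$ system twisted appropriately, and the $E$-polynomial contribution of $\hat{\CM}$ restricted over the endoscopic locus—their difference isolates the variant part over that locus. I would organize this as: (1) identify the endoscopic strata and their multiplicities; (2) write down the stringy/twisted $E$-polynomial contribution of each stratum via the abelian Hitchin fibration over it, using the known $E$-polynomial of an abelian variety $(u-1)^?(v-1)^?$ and of the Prym; (3) sum over $\gamma$ and apply GWZ to match with $E(\CM_{\mathrm{Dol}})-E(\hat{\CM}_{\mathrm{Dol}})$.

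The main obstacle I expect is bookkeeping the precise numerical coefficients and the exponents in the Tate twists: matching the combinatorial factor $\frac{n^{2g}-1}{n}$ and the exponents $(n^2-1)(g-1)$, $(n-1)(g-1)$, $(g-1)$ requires carefully tracking (a) the dimension of the Prym variety $\mathrm{Prym}(\widetilde{C}/C)$ for a degree-$n$ cyclic cover, which by Riemann--Hurwitz has dimension $(n-1)(g-1)$, (b) the codimension of the endoscopic locus in $\Lambda$, which is $n(n-1)(g-1) = c_n$ by \cite[Lemma 7.1]{HP} (this is why $c_n$ appears later in Theorem \ref{thm1.5}), and (c) how the fractional/stringy contributions assemble after passing to the $\Gamma$-quotient. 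The cleanest route is to invoke \cite[Proposition 8.2]{HT} essentially verbatim—Hausel--Thaddeus already performed this stringy computation for all $n$—and simply specialize to $n$ prime, where only the divisors $1$ and $n$ contribute, collapsing their general formula to the two-term expression above; combined with GWZ's theorem that the predicted identity holds, this yields \eqref{23}. Thus the proof is: cite GWZ for the equality of (stringy) $E$-polynomials, cite \cite[Prop.~8.2]{HT} for the explicit evaluation of the $\mathrm{PGL}_n$ stringy correction when $n$ is prime, and observe that the left-hand side is the variant $E$-polynomial by the $\Gamma$-isotypic decomposition.
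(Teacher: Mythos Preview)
Your proposal is correct and matches the paper's approach exactly: the paper does not give an independent proof of this proposition but simply records it as a consequence of Gr\"ochenig--Wyss--Ziegler's theorem together with the explicit stringy computation \cite[Proposition~8.2]{HT}, which is precisely your ``cleanest route.'' Your additional heuristic unpacking of the formula (Prym dimensions, endoscopic codimensions, the origin of the factor $\frac{n^{2g}-1}{n}$) is useful motivation but not part of the paper's argument, and your momentary hesitation over the combinatorial bookkeeping is harmless since you correctly fall back on citing \cite[Proposition~8.2]{HT} directly.
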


We denote $E(q)$ to be the polynomial by setting $u=v=q$ on the righthand side of (\ref{23}), 
\begin{equation}\label{E1}
E(q) := \frac{n^{2g}-1}{n} q^{\mathrm{dim}(\CM_{\mathrm{Dol}})}\left(
(q-1)^{(n-1)(2g-2)}- (1+q+\cdots+q^{n-1})^{2g-2}
\right),
\end{equation}
which is palindromic satisfying
\begin{equation}\label{E11}
   E(q) = q^{(2g-2)(2n^2+n-3)} E\left( \frac{1}{q}\right) .
\end{equation}
We denote $[E(q)]_{q^i}$ to be the coefficient of $q^i$ in the polynomial expansion of $E(q)$.

\begin{cor}\label{cor3.2}
We have
\[
\mathrm{dim}\left(H_{\mathrm{var}}^d(\CM_{\mathrm{Dol}}, \BQ)\right) = (-1)^d[E(q)]_{q^{2\mathrm{dim}\left(\CM_{\mathrm{Dol}}\right)-d}}. 
\]
\end{cor}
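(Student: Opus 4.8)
The plan is to extract the dimension of the variant cohomology from the $E$-polynomial identity of the previous proposition by tracking weights. First I would recall that for a smooth quasi-projective variety $X$, Poincar\'e duality relates compactly-supported cohomology to ordinary cohomology: $H^i_c(X,\BC) \cong H^{2N-i}(X,\BC)^\vee(-N)$ where $N = \dim X$, so the virtual Hodge polynomial of $X$ in degree $i$ packages the same data as ordinary cohomology in degree $2N-i$ up to a Tate twist by $N$. Under this, the coefficient $[E(X;u,v)]$ reassembles, after specializing $u=v=q$, into a generating function for the (weighted) Betti numbers of ordinary cohomology. The key structural input, which I would invoke from non-abelian Hodge theory as in \cite{HRV, dCHM1}, is that $\CM_{\mathrm{Dol}}$ (equivalently $\CM_B$) has \emph{pure} cohomology in the sense relevant here — more precisely, the mixed Hodge structure on $H^d(\CM_B,\BQ)$ is of Hodge--Tate type and, crucially, the specialization $u=v=q$ of the $E$-polynomial detects $\sum_d (-1)^d \dim H^d$ graded by a single weight parameter so that no cancellation occurs between different cohomological degrees contributing to the same power of $q$.

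Concretely, the steps I would carry out are: (1) Write $E(\CM_{\mathrm{Dol}};q,q) = \sum_d (-1)^d \dim\big(\mathrm{Gr}^W H^d_c\big) q^{(\text{weight})}$, and use Poincar\'e duality to rewrite this in terms of ordinary cohomology, obtaining a term $(-1)^d \dim H^d(\CM_{\mathrm{Dol}},\BQ)$ sitting at $q^{2N - d}$ after the twist by $q^{2N}$ — here $2N = 2\dim(\CM_{\mathrm{Dol}})$. (2) Do the same for $\hat{\CM}_{\mathrm{Dol}}$, whose cohomology is the $\Gamma$-invariant part $H^*(\CM_{\mathrm{Dol}},\BQ)^\Gamma$ by \eqref{Gamma1}. (3) Subtract: by the decomposition \eqref{decomp}, the difference $E(\CM_{\mathrm{Dol}};q,q) - E(\hat{\CM}_{\mathrm{Dol}};q,q)$, which by the proposition equals $E(q)$, is exactly the generating function for the variant cohomology, so $[E(q)]_{q^{2N-d}} = (-1)^d \dim H^d_{\mathrm{var}}(\CM_{\mathrm{Dol}},\BQ)$, which is the claimed formula. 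The sign $(-1)^d$ survives precisely because the Euler-characteristic alternation from $H^*_c$ matches the parity of $d$ after duality.

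I would then double-check the bookkeeping by verifying the palindromic symmetry: equation \eqref{E11} should be consistent with Poincar\'e duality for $H^*_{\mathrm{var}}$ together with the known cohomological amplitude of $\CM_{\mathrm{Dol}}$; the exponent $(2g-2)(2n^2+n-3)$ should be read as $2N$ minus (twice) the bottom weight/degree of the variant part, which can be cross-checked against $\dim(\CM_{\mathrm{Dol}}) = (n^2-1)(2g-2)$. This consistency check also confirms that the specialization $u=v=q$ loses no information needed for the statement.

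The main obstacle is justifying that no cancellation occurs when passing from the $E$-polynomial (an Euler characteristic, hence a priori an alternating sum) to actual Betti numbers: one needs that for each fixed $d$, the mixed Hodge structure on $H^d_{\mathrm{var}}(\CM_B,\BQ)$ is pure of a single weight determined by $d$ — which is the content of Theorem \ref{thm1.5}(b), or rather follows from the curious/weak-Lefschetz-type purity known for these character varieties (cf. \cite{HRV}) — so that the power of $q$ at which $H^d$ contributes is a strictly monotone function of $d$ on the variant part. Absent that, one only recovers $\sum_d (-1)^d \dim H^d_{\mathrm{var}}$ at each $q$-degree rather than the individual dimensions. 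I expect this purity to be either quoted from the literature or to be a formal consequence of the structure of $E(q)$ in \eqref{E1} itself being (up to the overall monomial) a product of cyclotomic-type factors with controlled coefficient signs; identifying the cleanest such justification is where the real care lies.
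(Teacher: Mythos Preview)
Your overall outline---Poincar\'e duality for compactly supported cohomology plus the identification \eqref{Gamma1}---is exactly the paper's approach, and your bookkeeping is correct: once one knows that $H^k_c(\CM_{\mathrm{Dol}},\BQ)$ is pure of weight $k$, the $E$-polynomial specialized at $u=v=q$ recovers $\sum_k (-1)^k \dim H^k_c \cdot q^k$ with no cancellation, and duality converts this to the claimed formula.

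The gap is in how you justify that purity. You correctly flag the ``no cancellation'' issue, but your proposed resolutions are either circular or aimed at the wrong variety. Invoking Theorem \ref{thm1.5}(b) is circular: that theorem is proved \emph{using} Corollary \ref{cor3.2} (see the proof of Theorem \ref{thm1.5}(b) in Section \ref{Section3}). More fundamentally, you conflate $\CM_{\mathrm{Dol}}$ and $\CM_B$: the mixed Hodge structure on $H^d(\CM_B,\BQ)$ is \emph{not} pure---the nontriviality of its weight filtration is precisely what makes $P=W$ interesting---so you cannot appeal to purity on the Betti side. The purity that works is on the Dolbeault side: $H^k(\CM_{\mathrm{Dol}},\BQ)$ is pure of weight $k$. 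This is a standard fact, coming from the semiprojectivity of $\CM_{\mathrm{Dol}}$ (it carries a $\BC^*$-action with projective fixed locus such that every point flows to the fixed locus as $t\to 0$), which forces the mixed Hodge structure on each $H^k$ to agree with that of a projective variety. The same holds for $\hat{\CM}_{\mathrm{Dol}}$. With this in hand, your steps (1)--(3) go through verbatim and match the paper's two-line proof.
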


\begin{proof}
Since the cohomology groups $H^k$ of the moduli spaces $\CM_{\mathrm{Dol}}$ and $\hat{\CM}_{\mathrm{Dol}}$ are pure of weights $k$, their $E$-polynomials recover the virtual Hodge polynomials. Corollary \ref{cor3.2} follows from the Poincar\'e duality and (\ref{Gamma1}).
\end{proof}

\subsection{Proof of Theorem \ref{thm1.5} (a).} We define
\begin{equation}\label{decompP}
v_P^{i,j} := \mathrm{dim}\left(\mathrm{Gr}^P_iH_{\mathrm{var}}^{i+j}(\CM_\mathrm{Dol}, \BQ)\right)
\end{equation}
with $\mathrm{Gr}^P_*$ the graded piece of the perverse filtration. Recall $c_n$ from Theorem \ref{thm1.5}. It suffices to show that (\ref{decompP}) forms a $c_n$-sequence.

We check that (\ref{decompP}) satisfies (i,ii,iii) of Proposition \ref{prop2.1} for
\begin{equation}\label{km}
k =c_n, \quad m= \frac{1}{2}\mathrm{dim}(\CM_{\mathrm{Dol}}) = (n^2-1)(g-1).
\end{equation}
The condition (i) follows directly from Proposition \ref{prop1.4}. The condition (ii),
\[
\mathrm{dim}\left( \mathrm{Gr}^P_{m-i}H_{\mathrm{var}}^{m-i+j}(\CM_{\mathrm{Dol}}, \BQ) \right) = \mathrm{dim}\left( \mathrm{Gr}^P_{m+i}H_{\mathrm{var}}^{m+i+j}(\CM_{\mathrm{Dol}}, \BQ) \right),
\]
follows from the Relative Hard Lefschetz \cite{BBD} with respect to the Hitchin map $\pi: \CM_{\mathrm{Dol}} \to \Lambda$, and its compatibity with the $\Gamma$-decomposition.

Since
\[
\mathrm{dim}\left(H_{\mathrm{var}}^d(\CM_{\mathrm{Dol}}, \BQ)\right) = \sum_{i+j = d} v_{P}^{i,j},
\]
the condition (iii) is equivalent to 
\[
\mathrm{dim}\left(H_{\mathrm{var}}^{m+c_n-i}(\CM_{\mathrm{Dol}}, \BQ)\right) = \mathrm{dim}\left(H_{\mathrm{var}}^{m+c_n+i}(\CM_{\mathrm{Dol}}, \BQ)\right),
\]
which follows from Corollary \ref{cor3.2} and the symmety (\ref{E11}),
\begin{equation*}
[E(q)]_{q^i} = [E(q)]_{q^j}, \quad \mathrm{if}~~~i+j=6m-2c_n=(2n^2+n-3)(2g-2).
\end{equation*}
This completes the proof. \qed

\subsection{A symmetry}\label{Section3.3}
We see from Corollary \ref{cor3.2} that $H^2(\CM_{\mathrm{Dol}}, \BQ) = H^2(\hat{\CM}_{\mathrm{Dol}}, \BQ)$. So there is only one class $\eta$ spanning $H^2(\CM_{\mathrm{Dol}}, \BQ)$ (see \cite{Markman}), and it is relatively ample with respect to the Hitchin map. As a consequence of Theorem \ref{thm1.5} (a), we obtain the following symmetry on the cohomology of $\CM_{\mathrm{Dol}}$.

\begin{cor}\label{cor3.3}
Cupping with a power of the class $\eta$ induces an isomorphism
\begin{equation}\label{3.3}
\eta^i: H_{\mathrm{var}}^{m+c_n-i}(\CM_{\mathrm{Dol}}, \BQ ) \xrightarrow{\simeq} H_{\mathrm{var}}^{m+c_n+i}(\CM_{\mathrm{Dol}}, \BQ), \quad \forall i,m.
\end{equation}
\end{cor}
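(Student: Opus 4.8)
The plan is to derive \eqref{3.3} from Theorem \ref{thm1.5}(a) together with the Relative Hard Lefschetz theorem \cite{BBD} for the Hitchin map $\pi\colon\CM_{\mathrm{Dol}}\to\Lambda$, exactly in the way condition (ii) was verified in the proof of Theorem \ref{thm1.5}(a).

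First I would record that cup product with $\eta$, hence with any power $\eta^i$, respects the $\Gamma$-decomposition \eqref{decomp} and in particular preserves the variant summand. Indeed, by Section \ref{Section3.3} the line $H^2(\CM_{\mathrm{Dol}},\BQ)=H^2(\hat\CM_{\mathrm{Dol}},\BQ)=H^2(\CM_{\mathrm{Dol}},\BQ)^\Gamma$ is $\Gamma$-invariant, so $\gamma\cdot(\eta\cup x)=\eta\cup(\gamma\cdot x)$ for all $\gamma\in\Gamma$; thus $\eta\cup(-)$ commutes with the $\Gamma$-action, preserves each isotypic component, and restricts to a map $H^k_{\mathrm{var}}(\CM_{\mathrm{Dol}},\BQ)\to H^{k+2}_{\mathrm{var}}(\CM_{\mathrm{Dol}},\BQ)$. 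Next I would apply Relative Hard Lefschetz for $\pi$ with respect to the relatively ample class $\eta$: with $m=\dim\Lambda=(n^2-1)(g-1)$, and using the normalization of the perverse filtration in which $\eta$ has perverse degree $2$ (consistent with Theorem \ref{thm1.1}), cup product with $\eta^i$ induces isomorphisms on the associated graded pieces
\[
\eta^i\colon\ \mathrm{Gr}^P_{m-i}H^{d}_{\mathrm{var}}(\CM_{\mathrm{Dol}},\BQ)\ \xrightarrow{\ \simeq\ }\ \mathrm{Gr}^P_{m+i}H^{d+2i}_{\mathrm{var}}(\CM_{\mathrm{Dol}},\BQ)
\]
for all $i\ge 0$ and all $d\ge 0$, the passage to the variant part being legitimate by the $\Gamma$-equivariance of everything in sight, exactly as in the proof of condition (ii) of Theorem \ref{thm1.5}(a).

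Finally I would feed in Theorem \ref{thm1.5}(a), which says that for every $k$ one has $P_{k-c_n-1}H^k_{\mathrm{var}}=0$ and $P_{k-c_n}H^k_{\mathrm{var}}=H^k_{\mathrm{var}}$; equivalently $H^k_{\mathrm{var}}(\CM_{\mathrm{Dol}},\BQ)$ is concentrated in perverse degree $k-c_n$, so $\mathrm{Gr}^P_{k-c_n}H^k_{\mathrm{var}}=H^k_{\mathrm{var}}$ and every other perverse graded piece of $H^k_{\mathrm{var}}$ vanishes. Setting $d=m+c_n-i$ gives $d-c_n=m-i$ and $(d+2i)-c_n=m+i$, so in the displayed isomorphism the source is all of $H^{m+c_n-i}_{\mathrm{var}}(\CM_{\mathrm{Dol}},\BQ)$ and the target is all of $H^{m+c_n+i}_{\mathrm{var}}(\CM_{\mathrm{Dol}},\BQ)$, with no intervening filtration steps; hence the induced map on associated graded is honest cup product with $\eta^i$, yielding \eqref{3.3}. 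I do not expect a genuine obstacle here: all the substance is contained in Theorem \ref{thm1.5}(a), and the only point requiring care is to stay consistent with the indexing convention under which Relative Hard Lefschetz exchanges $\mathrm{Gr}^P_{m-i}$ with $\mathrm{Gr}^P_{m+i}$ (the same convention used in the proof of condition (ii) above).
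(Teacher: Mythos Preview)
Your proposal is correct and follows essentially the same route as the paper: invoke Relative Hard Lefschetz for $\pi$ to get isomorphisms $\eta^i\colon \mathrm{Gr}^P_{m-i}H_{\mathrm{var}}^{m+j-i}\xrightarrow{\simeq}\mathrm{Gr}^P_{m+i}H_{\mathrm{var}}^{m+j+i}$, then use Theorem \ref{thm1.5}(a) to see that only $j=c_n$ contributes, so the graded isomorphism is the honest cup product map on all of $H_{\mathrm{var}}^{m+c_n\pm i}$. Your extra remarks on $\Gamma$-equivariance of $\eta\cup(-)$ and on why the associated graded map coincides with cup product when the filtration is concentrated in a single step are accurate elaborations of points the paper leaves implicit.
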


\begin{proof}
The Relative Hard Lefschetz Theorem implies that 
\begin{equation*}\label{eq25}
\eta^i: \mathrm{Gr}^P_{m-i}H_{\mathrm{var}}^{m+j-i}(\CM_{\mathrm{Dol}}, \BQ) \xrightarrow{\simeq} \mathrm{Gr}^P_{m+i}H_{\mathrm{var}}^{m+j+i}(\CM_{\mathrm{Dol}}, \BQ).
\end{equation*}
Since (\ref{decompP}) is a $c_n$-decomposition by Theorem \ref{thm1.5} (a), the only non-trivial isomorphisms (\ref{eq25}) are those with $j = c_n$, and Corollary \ref{cor3.3} follows.
\end{proof}

\begin{rmk}
In general, if $n$ is not prime, (\ref{3.3}) does not hold. In particular, Corollary \ref{cor3.3} relies heavily on the fact that (\ref{decompP}) is a $c_n$-sequence, which, by the proof of Theorem \ref{thm1.5} (a), further relies on the symmetries of the coefficients of the polynomial $E(q)$.
\end{rmk}

\subsection{$E$-polynomials of character varieties}

Recall the polynomail $E(q)$ introduced in (\ref{E1}). In view of (\ref{Gamma2}), We define the variant $E$-polynomial
\[
E_{\mathrm{var}}(\CM_B; u,v) := E(\CM_B; u,v) - E(\hat{\CM}_B; u,v).
\]
The following proposition calculates the variant $E$-polynomial for $\CM_B$. We note that the two sides of the equation (\ref{p3.5}) are of completely different flavors. The left-hand side is governed by point counting over finite fields via the character tables of $\mathrm{GL}_n(\BF_q)$ and $\mathrm{SL}_n(\BF_q)$, while the right-hand side calculates suitable cohomology groups of the moduli of certain endoscopic Higgs bundles.

\begin{prop}\label{prop3.5}
We have
\begin{equation}\label{p3.5}
E_{\mathrm{var}}(\CM_B; u,v)\cdot (uv)^{(n^2+n-2)(g-1)} = E(uv).
\end{equation}
\end{prop}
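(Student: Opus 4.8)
The plan is to compare two expressions for the variant $E$-polynomial of the Betti moduli space. On the one hand, $E_{\mathrm{var}}(\CM_B; u,v) = E(\CM_B;u,v) - E(\hat{\CM}_B;u,v)$ can in principle be read off from the explicit $E$-polynomial computations of Hausel--Rodriguez-Villegas \cite{HRV} for the $\mathrm{GL}_n$- (equivalently, via \cite{HRV}, $\mathrm{PGL}_n$-) character variety and of Mereb \cite{SL} for the $\mathrm{SL}_n$-character variety; the relevant combinatorial input is the comparison of the character tables of $\mathrm{GL}_n(\BF_q)$ and $\mathrm{SL}_n(\BF_q)$, which, because $n$ is prime, simplifies dramatically. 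On the other hand, the non-abelian Hodge correspondence (\ref{NAH}) identifies $H^*(\CM_B,\BQ)$ with $H^*(\CM_{\mathrm{Dol}},\BQ)$, and under this identification the $\Gamma$-decomposition (\ref{decomp}) matches the $\Gamma$-decomposition on the Betti side; moreover, since the cohomology of $\CM_{\mathrm{Dol}}$ and $\hat\CM_{\mathrm{Dol}}$ is pure (as used in Corollary \ref{cor3.2}), the Dolbeault $E$-polynomial of the variant part is already known from the topological mirror symmetry formula (\ref{23}). The key point is that specializing (\ref{23}) at $u=v=q$ and renormalizing by an appropriate power of $uv$ produces exactly $E(q)$ as defined in (\ref{E1}).

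So the strategy is: first, I would invoke \cite{HRV, SL} to write down $E(\CM_B;u,v)$ and $E(\hat\CM_B;u,v)$, subtract, and identify $E_{\mathrm{var}}(\CM_B;u,v)$ as an explicit polynomial in $u,v$. Second, I would appeal to the purity of the cohomology of $\CM_{\mathrm{Dol}}$ (Hodge--Tate type in each degree, weights concentrated in degree), which forces the variant Betti $E$-polynomial to be a specialization that records only the diagonal Hodge numbers; combined with the equality of Dolbeault and Betti Betti numbers, this pins $E_{\mathrm{var}}(\CM_B;u,v)$ down in terms of $E(\CM_{\mathrm{Dol}};u,v)-E(\hat\CM_{\mathrm{Dol}};u,v)$, hence in terms of the right-hand side of (\ref{23}). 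Third, I would carry out the bookkeeping of exponents: the right-hand side of (\ref{23}) at $u=v=q$ equals $q^{2(n^2-1)(g-1)}\cdot E(q)\cdot q^{-\dim(\CM_{\mathrm{Dol}})}\cdot(\dots)$ — more precisely, track that the overall factor $(uv)^{(n^2-1)(g-1)}$ in (\ref{23}) combined with the normalization built into (\ref{E1}) accounts for the weight $(uv)^{(n^2+n-2)(g-1)}$ appearing on the left of (\ref{p3.5}), since $(n^2-1)(g-1) + (n-1)(g-1) = (n^2+n-2)(g-1)$. Matching the polynomial factors $\big((u-1)(v-1)\big)^{(n-1)(g-1)} - \big((1+\dots+u^{n-1})(1+\dots+v^{n-1})\big)^{g-1}$ against $(q-1)^{(n-1)(2g-2)} - (1+\dots+q^{n-1})^{2g-2}$ at $u=v=q$ is then immediate, and the factor $\frac{n^{2g}-1}{n}$ is common to both sides.

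The main obstacle I expect is the second step: justifying that the Betti-side variant $E$-polynomial, which a priori involves genuinely mixed Hodge numbers $h^{j,k}$ with $j\neq k$, is recovered from the Dolbeault comparison in a way compatible with the single-variable specialization on the right of (\ref{p3.5}). Concretely, one must argue that the relevant cohomology (or at least the virtual class capturing it) is of Hodge--Tate type, or balanced enough that $E_{\mathrm{var}}(\CM_B;u,v)$ depends only on the product $uv$ up to the explicit monomial prefactor — equivalently, that the right-hand side of (\ref{23}) is already a polynomial in $uv$ in each of its two terms, which one checks directly from the factored form. Once that symmetry is in hand, the identity (\ref{p3.5}) reduces to the exponent bookkeeping above and the elementary observation that setting $u=v=q$ in $(u-1)(v-1)$ and in $(1+\dots+u^{n-1})(1+\dots+v^{n-1})$ yields $(q-1)^2$ and $(1+\dots+q^{n-1})^2$ respectively, raised to the appropriate powers $(n-1)(g-1)$ and $(g-1)$, which double to $(n-1)(2g-2)$ and $2g-2$.

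Finally, I would remark that although (\ref{p3.5}) is stated as an identity of $E$-polynomials, its content is precisely the compatibility of the two ``flavors'' noted in the statement: the left-hand side comes from $\mathrm{GL}_n(\BF_q)$ and $\mathrm{SL}_n(\BF_q)$ point-counting, the right-hand side from the stringy $E$-polynomial of $\hat\CM_{\mathrm{Dol}}$ via \cite{HT, GWZ}, so the proof is ultimately a matter of confronting these two independently computed quantities and observing they agree after the stated normalization.
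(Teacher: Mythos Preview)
There is a genuine gap. Your step~2 tries to transport the Dolbeault variant $E$-polynomial (\ref{23}) to the Betti side via the non-abelian Hodge correspondence and the purity of $H^*(\CM_{\mathrm{Dol}})$, but this is illegitimate: the diffeomorphism (\ref{NAH}) is not algebraic and carries no information about the mixed Hodge structure on $H^*(\CM_B)$. Knowing all Betti numbers of $\CM_B$ does not determine $E(\CM_B;u,v)$, since the latter records the weight filtration. Your proposed fix --- checking that the right-hand side of (\ref{23}) is a polynomial in $uv$ --- is both irrelevant (that is the Dolbeault $E$-polynomial, not the Betti one) and false: $(u-1)(v-1)=uv-u-v+1$ is not a polynomial in $uv$, nor is $(1+\cdots+u^{n-1})(1+\cdots+v^{n-1})$. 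Finally, the Hodge--Tate property you appeal to is, in the paper, a \emph{consequence} of Proposition~\ref{prop3.5} (Lemma~\ref{lem3.7}), so invoking it here would be circular.

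The paper's proof is entirely on the Betti side and is essentially the computation you gesture at in your step~1 but never perform. First, Katz's theorem \cite[Appendix]{HRV} together with the polynomial-count property established by \cite{HRV,SL} shows that $E(\CM_B;u,v)$ and $E(\hat\CM_B;u,v)$ are polynomials in $q=uv$; this is how one actually reduces to a single variable. Then one plugs in the explicit formulas (\ref{E01})--(\ref{E02}) from \cite{HRV,SL} expressing these $E$-polynomials as sums over types $\tau$ of multi-partitions weighted by normalized hook polynomials $\CH_{\tau'}(q)$ and constants $C_\tau^t$. Because $n$ is prime, only $t=1,n$ occur, and using \cite[Equation (33)]{SL} one checks that $C_\tau^1=C_\tau^0$ and $C_\tau^n=0$ for all types except two specific ones, $\tau_1$ (multiplicity $m_{(1^1),1}=n$) and $\tau_2$ (multiplicity $m_{(1^1),n}=1$). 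Their hook polynomials are $q^{-n/2}(1-q)^n$ and $q^{-n/2}(1-q^n)$ respectively, and substituting these into the difference of (\ref{E01}) and (\ref{E02}) yields exactly (\ref{Last}). That explicit identification of $\tau_1,\tau_2$ and the computation of $C_\tau^t$ is the substance of the proof; the exponent bookkeeping in your step~3 is correct but comes only after this.
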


\begin{proof}
The result of Katz \cite[Appendix]{HRV} and the calculations of \cite{HRV, SL} imply that the $E$-polynomials $E(\CM_B; u,v)$ and $E(\hat{\CM}_B; u,v)$ are polynomials in the variable $q = uv$. Hence it suffices to show that 
\begin{equation}\label{Last}
     E_{\mathrm{var}}(\CM_B; \sqrt{q}, \sqrt{q}) =  \frac{n^{2g}-1}{n}q^{(n^2-n)(g-1)} \Big{(}(q-1)^{(n-1)(2g-2)} -
{(}1+q+\dots+q^{n-1}{)}^{(2g-2)}\Big{)}.
\end{equation}

By \cite[Equation (3.2.4)]{HRV} and \cite[Theorem 3.4]{SL}, we have
\begin{equation}\label{E01}
E(\hat{\CM}_B; \sqrt{q},\sqrt{q}) = \sum_{\tau}\left( q^{\frac{n^2}{2}} \frac{\CH_{\tau'}(q)}{q-1} \right)^{2g-2} C_{\tau}^0; 
\end{equation}
\begin{equation}\label{E02}
E(\CM_B; \sqrt{q},\sqrt{q}) = \sum_{\tau,t} \left( q^{\frac{n^2}{2}} \frac{\CH_{\tau'}(q)}{q-1} \right)^{2g-2} t^{2g-1}C_{\tau}^t.
\end{equation}
Here we follow the notation of \cite{HRV, SL}: the summation in (\ref{E01}) is taken over types $\tau$ of size $n$ multi-partitions and the summation in (\ref{E02}) is taken over $\tau$ of size $n$ multi-partitions and divisors $t$ of $n$ (see \cite[Section 2.5]{SL}); the polynomial $\CH_{\tau'}(q)$ is the \emph{normalized hook polynomial} associated with the conjugate $\tau'$ of the partition $\tau$ \cite[Section 3.6]{SL}; the constant $C_{\tau}^0$ is given by \cite[Equation (7)]{SL}, and \cite[Equation (33)]{SL} expresses every $C_\tau^n$ in terms of $C_\tau^0$.\footnote{See \cite[Equation (34)]{SL} for the connection between $C_{\tau}^0$ and the coefficients $C_\tau$ used in \cite{HRV}.}

Now we calculate the difference of (\ref{E01}) and (\ref{E02}).

For our purpose, we focus on 2 types of multi-partitions $\tau_1$ and $\tau_2$ as follows. Recall the type $\tau = (m_{\lambda,d})_{\lambda,d\geq 1}$ of a multi-partition from \cite[Definition 2.1]{SL}. Let $\tau_1$ be the type of the multi-partition with the only non-trivial multiplicity $m_{(1^1),1} = n$, and we calculate directly that
\begin{equation}\label{tau1}
\CH_{\tau_1'}(q) = \left(q^{-\frac{1}{2}}(1-q)\right)^n = q^{-\frac{n}{2}}(1-q)^n.
\end{equation}
Let $\tau_2$ be the type of the multi-partition with the only non-trivial multiplicity $m_{(1^1),n} = 1$, and we have
\begin{equation}\label{tau2}
\CH_{\tau_2'}(q) = q^{-\frac{n}{2}}(1-q^n).
\end{equation}
Furthermore, by a direct calculation using the concrete formula \cite[Equation (33)]{SL} for the constants $C_\tau^t$, we obtain that
\begin{enumerate}
    \item[(a)] $C_\tau^1 = C_\tau^0,~~~ C_\tau^n=0$, for $\tau \neq \tau_1, \tau_2$;
    \item[(b)] $C_{\tau_1}^1 = C_{\tau_1}^0 - \frac{1}{n}$, $C_{\tau_1}^n = 1$;
    \item[(c)] $C_{\tau_2}^1 = C_{\tau_2}^0 + \frac{1}{n}$, $C_{\tau_1}^n = -1$.
\end{enumerate}

Since $n$ is a prime number and $t$ divides $n$, the integer $t$ is either $1$ or $n$ on the right-hand side of (\ref{E02}), 
\begin{equation} \label{eq33}
E(\CM_B; \sqrt{q},\sqrt{q})= \sum_{\tau} \left( q^{\frac{n^2}{2}} \frac{\CH_{\tau'}(q)}{q-1} \right)^{2g-2} C_{\tau}^1 
+ \sum_{\tau} \left( q^{\frac{n^2}{2}} \frac{\CH_{\tau'}(q)}{q-1} \right)^{2g-2} n^{2g-1}C_{\tau}^n.
\end{equation}
By (a,b,c), (\ref{tau1}), and (\ref{E01}), we have 
\begin{multline} \label{eq34}
\sum_{\tau} \left( q^{\frac{n^2}{2}} \frac{\CH_{\tau'}(q)}{q-1} \right)^{2g-2} C_{\tau}^1 = E(\hat{\CM}_B; \sqrt{q},\sqrt{q}) \\+ \left( q^{\frac{n^2}{2}} \frac{q^{-\frac{n}{2}}(1-q)^n}{q-1} \right)^{2g-2}\cdot\left(-\frac{1}{n}\right) + 
\left( q^{\frac{n^2}{2}} \frac{q^{-\frac{n}{2}}(1-q^n)}{q-1} \right)^{2g-2}\cdot\left(\frac{1}{n}\right).
\end{multline}
Similarly, (a,b,c) and (\ref{tau2}) yield
\begin{multline} \label{eq35}
\sum_{\tau} \left( q^{\frac{n^2}{2}} \frac{\CH_{\tau'}(q)}{q-1} \right)^{2g-2}n^{2g-1} C_{\tau}^n = \\+ \left( q^{\frac{n^2}{2}} \frac{q^{-\frac{n}{2}}(1-q)^n}{q-1} \right)^{2g-2} n^{2g-1} 
+ 
\left( q^{\frac{n^2}{2}} \frac{q^{-\frac{n}{2}}(1-q^n)}{q-1} \right)^{2g-2}n^{2g-1} \cdot (-1).
\end{multline}
We complete the proof of (\ref{Last}) by combining (\ref{eq33}), (\ref{eq34}), and (\ref{eq35}).
\end{proof}

\subsection{Vanishing and Hodge--Tate}
We prove some properties of the variant cohomology of $\CM_B$ which play a crucial role in the proof of Theorem \ref{thm1.5} (b). We denote 
\begin{equation*}
    w^{i,j}:= \mathrm{dim}\left(\mathrm{Gr}^W_i H_{\mathrm{var},c}^{j}(\CM_B, \BQ)\right)
\end{equation*}
where $H_{\mathrm{var},c}^*$ is the variant part of the compactly support cohomology.

\begin{lem}\label{lem3.6}
If $i$ is odd, or $i=2i'$ with $i'+j$ odd, we have $w^{i,j} = 0$.
\end{lem}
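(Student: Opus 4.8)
The goal is to show that the variant compactly supported cohomology of $\CM_B$ is of Hodge--Tate type and concentrated in the "correct" parity, i.e. $w^{i,j}=0$ unless $i=2i'$ is even and $i'+j$ is even. The natural strategy is to \emph{upgrade the $E$-polynomial computation of Proposition \ref{prop3.5} to a statement about the full mixed Hodge structure}, using the following two observations. First, the formula \eqref{p3.5} shows that $E_{\mathrm{var}}(\CM_B;u,v)$ is a polynomial in $q=uv$ alone (with integer coefficients, since $E(q)$ is). The classical consequence of being a polynomial in $uv$ is that the virtual Hodge numbers $h^{j,k}$ vanish off the diagonal $j=k$; but $E$-polynomials only see alternating sums, so one must rule out cancellation. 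Second, I would invoke purity/semisimplicity inputs available for the relevant pieces: since $n$ is prime, the variant cohomology is governed by rank-$1$ endoscopic Hitchin systems (cotangent bundles of Prym varieties, cf.\ the discussion after Theorem \ref{thm1.5} and the proof of Proposition \ref{prop3.5}), and the $E$-polynomial $E(uv)$ of the right-hand side is manifestly a signed sum of powers of $q$.

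\textbf{Key steps.} (1) Reinterpret the right-hand side of \eqref{p3.5}: expanding $E(q)$ from \eqref{E1}, write $E_{\mathrm{var}}(\CM_B;\sqrt q,\sqrt q)=\sum_i e_i q^i$ with $e_i\in\BZ$, and note from the explicit factored form $(q-1)^{(n-1)(2g-2)}-(1+\cdots+q^{n-1})^{2g-2}$ times $q^{\mathrm{dim}}$ (divided by $(uv)^{(n^2+n-2)(g-1)}$) that the sign of $e_i$ is determined by the parity of a fixed affine-linear function of $i$. (2) Establish that $H^*_{\mathrm{var}}(\CM_B,\BQ)$ — equivalently the variant part of the moduli of endoscopic rank-$1$ Higgs/local-system data — carries a pure, or at least Hodge--Tate, mixed Hodge structure in each degree; for the rank-$1$ endoscopic pieces this follows because the spaces involved are (up to the $\Gamma$-twisting) abelian-variety cotangent bundles, whose cohomology is of Hodge--Tate type, and $\Gamma$ acts algebraically. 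This forces every $h^{j,k}(\mathrm{Gr}^W_{j+k}H^i_{\mathrm{var},c})$ with $j\neq k$ to vanish, so $w^{i,j}=0$ for $i$ odd. (3) With off-diagonal Hodge numbers gone, $E_{\mathrm{var}}(\CM_B;u,v)=\sum_{i',j}(-1)^j w^{2i',j}(uv)^{i'}$; compare coefficients with Step (1). Because each $w^{2i',j}\geq 0$ and contributes with sign $(-1)^j$, the coefficient $e_{i'}$ of $q^{i'}$ equals $\sum_j(-1)^j w^{2i',j}$; matching the \emph{known} sign of $e_{i'}$ (all nonzero $e_{i'}$ have one fixed parity of $j$ for given $i'$) and using that a signed sum of nonnegative integers whose total has a definite sign, together with an independent bound on the weights (weight $\leq 2\cdot\text{degree}$, i.e.\ $i\leq 2j$, from the structure of $\CM_B$), pins down that only one parity of $j$ can occur, namely $i'+j$ even. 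This is where the combinatorial lemmas of Section \ref{Section2} are \emph{not} yet needed — that is Theorem \ref{thm1.5}(b) proper — but the present lemma is the parity input those arguments consume.

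\textbf{Main obstacle.} The delicate point is Step (2): proving that the variant MHS is genuinely Hodge--Tate (purely of type $(i',i')$ in each graded weight piece), rather than merely having diagonal \emph{virtual} Hodge numbers. For the $\Gamma$-invariant ($\mathrm{PGL}_n$) part this is classical (Hausel--Rodriguez-Villegas, Mereb: the character varieties are polynomial-count and of Hodge--Tate type), so the real content is the variant summand. I expect to handle it by identifying, via Hausel--Thaddeus/Gr\"ochenig--Wyss--Ziegler on the Dolbeault side, the variant cohomology with cohomology of (shifted, $\Gamma$-isotypic) Prym cotangent bundles — tori and abelian varieties, whose cohomology is Tate — transported to the Betti side by non-abelian Hodge, and then checking the transported MHS remains Hodge--Tate because the $E$-polynomial already is a polynomial in $uv$ and the space is polynomial-count; alternatively, one cites \cite{HRV, SL} directly, as Mereb's point-count over $\BF_q$ gives $E(\CM_B)\in\BZ[q]$ and Katz's appendix to \cite{HRV} upgrades polynomial-count to purity of the weight filtration, forcing the Hodge--Tate property. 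Granting that, Step (3) is bookkeeping, and the odd-$i$ vanishing is immediate.
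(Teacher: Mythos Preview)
Your plan hinges on Step (2) --- establishing the Hodge--Tate property of $H^*_{\mathrm{var},c}(\CM_B,\BQ)$ \emph{a priori} --- but neither of your proposed routes works. Non-abelian Hodge does not respect mixed Hodge structures (this is precisely why $P=W$ is nontrivial), so you cannot transport Hodge--Tate from the Dolbeault endoscopic Prym pieces to the Betti side. And Katz's theorem in the appendix to \cite{HRV} says only that a polynomial-count variety has $E$-polynomial equal to the counting polynomial in $q=uv$; it yields no information about purity or Hodge--Tate type of the actual mixed Hodge structure. In fact the logical order in the paper is the reverse of yours: the Hodge--Tate property (Lemma~\ref{lem3.7}) is deduced \emph{from} the present lemma, not used as an input. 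Your Step (3), even granting Step (2), also does not close: knowing the sign of $\sum_j(-1)^jw^{2i',j}$ for each fixed $i'$ does not determine the individual parities without independent knowledge of $\sum_j w^{2i',j}$, which neither Proposition~\ref{prop3.5} nor Corollary~\ref{cor3.2} supplies.

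The paper's argument bypasses all of this with a short numerical trick. Proposition~\ref{prop3.5} gives $E_{\mathrm{var}}(\CM_B;q,q)=q^{-2N}E(q^2)$ with $N=(n^2+n-2)(g-1)$ even, while Corollary~\ref{cor3.2} (via Poincar\'e duality) gives a second expression $E(q^2)=\sum_{i,j}(-1)^j w^{i,j}q^{2j}$. Since $E_{\mathrm{var}}(\CM_B;q,q)$ is visibly a polynomial in $q^2$, one may set $q^2=-1$ in both expressions; the relation then reads
\[
\sum_{i',j}(-1)^{i'+j}w^{2i',j}\;=\;\sum_{i,j}w^{i,j}.
\]
The left-hand side is at most $\sum_{i',j}w^{2i',j}\leq\sum_{i,j}w^{i,j}$, so equality forces simultaneously $w^{i,j}=0$ for $i$ odd and $w^{2i',j}=0$ for $i'+j$ odd. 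No structural input on the mixed Hodge structure is used beyond nonnegativity of the $w^{i,j}$.
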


\begin{proof}
By Proposition \ref{prop3.5}, we have
\[
E_{\mathrm{var}}(\CM_B; q,q)= \sum_{i,j}(-1)^{j}w^{i,j}\cdot q^i =  q^{-(n^2+n-2)(2g-2)}E(q^2).
\]
In particular $w^{i,j} = 0$ if $i$ is odd. Together with Corollary \ref{cor3.2}, we have the expressions
\begin{equation}\label{eq28}
E_{\mathrm{var}}(\CM_B; q,q) = \sum_{i',j}(-1)^jw^{2i',j}q^{2i'}, \quad E(q^2) = \sum_{i',j}(-1)^j w^{2i',j}q^{2(2i'+j)}.
\end{equation}
Proposition \ref{prop3.5} further implies that
\[
\sum_{i',j}(-1)^{i'+j}w^{2i',j} = \sum_{i',j} w^{2i',j}
\]
by setting $q^2=-1$ in the equations (\ref{eq28}). Thus $w^{2i',j}=0$ if $i'+j$ is odd.
\end{proof}

The vanishing of Lemma \ref{lem3.6} implies that there is no cancellation of Hodge numbers in calculating each term of the $E$-polynomial $E_{\mathrm{var}}(\CM_B; u,v)$. In particular, we deduce the following lemma from Proposition \ref{prop3.5} that the mixed Hodge structures on the variant cohomology groups $H_{\mathrm{var},c}^{d}(\CM_B, \BQ)$ are of Hodge--Tate types.

\begin{lem}\label{lem3.7}
The mixed Hodge structure on $H_{\mathrm{var},c}^{d}(\CM_B, \BQ)$ is of Hodge--Tate type, \emph{i.e.}, 
\[
h^{i,j}(\mathrm{Gr}^W_{i+j} H_{\mathrm{var},c}^{d}(\CM_B, \BQ))=0, \quad \mathrm{if}~~i \neq j.
\]
\end{lem}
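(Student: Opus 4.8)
The plan is to deduce the Hodge--Tate property directly from the explicit formula of Proposition~\ref{prop3.5} together with the vanishing established in Lemma~\ref{lem3.6}. The point is that, by Proposition~\ref{prop3.5}, the variant virtual Hodge polynomial $H_{\mathrm{var}}(\CM_B; t, u, v) := \sum_{i,j,k} h^{j,k}(\mathrm{Gr}^W_{j+k} H_{\mathrm{var}, c}^i(\CM_B, \BC)) t^i u^j v^k$ specializes under $t = -1$ to
\[
E_{\mathrm{var}}(\CM_B; u,v) = (uv)^{-(n^2+n-2)(g-1)} E(uv),
\]
a polynomial in the single variable $uv$. So the alternating sum (over $i$) of the Hodge numbers collects, for each fixed power $(uv)^m$, a contribution that depends only on $m$; the content of the lemma is that there is no off-diagonal contribution, i.e. $h^{j,k} = 0$ whenever $j \neq k$.

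The key step is to observe that Lemma~\ref{lem3.6} removes all sign cancellation. First I would note that $w^{i,j} = \dim \mathrm{Gr}^W_i H_{\mathrm{var},c}^j = \sum_{a+b = i} h^{a,b}(\mathrm{Gr}^W_i H_{\mathrm{var},c}^j)$, so $w^{i,j} = 0$ forces $h^{a,b}(\mathrm{Gr}^W_{a+b} H_{\mathrm{var},c}^j) = 0$ for every $(a,b)$ with $a + b = i$ and that $j$. By Lemma~\ref{lem3.6}, $w^{i,j} = 0$ unless $i = 2i'$ is even and $i' + j$ is even; in particular $H_{\mathrm{var},c}^j$ only has weights in even degrees $2i'$, and moreover the parity of $i'$ is pinned to the parity of $j$. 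Since the only weights occurring are even, say $2i'$, a Hodge number $h^{a,b}$ with $a + b = 2i'$ and $a \neq b$ comes with $a, b$ both possibly odd or of mixed parity; the crucial thing is that within a single fixed weight $2i'$ and fixed cohomological degree $j$, all Hodge pieces $h^{a,b}$ with $a+b=2i'$ are \emph{non-negative} integers, so there can be no cancellation when they are summed with the common sign $(-1)^j$ to form the coefficient of $t^j u^a v^b$ in $H_{\mathrm{var}}$. Hence each individual $h^{a,b}(\mathrm{Gr}^W_{2i'} H_{\mathrm{var},c}^j)$ can be read off as an honest coefficient of the virtual Hodge polynomial $H_{\mathrm{var}}(\CM_B; t,u,v)$, up to the sign $(-1)^j$ which is constant on the relevant range.

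Now I would compare with the specialization. Because the specialization $E_{\mathrm{var}}(\CM_B; u, v) = (uv)^{-(n^2+n-2)(g-1)} E(uv)$ is a polynomial in $uv$, every monomial $u^a v^b$ appearing in $H_{\mathrm{var}}(\CM_B; -1, u, v)$ must have $a = b$. By the no-cancellation principle of the previous paragraph, the absence of a monomial $u^a v^b$ with $a \neq b$ in $H_{\mathrm{var}}(\CM_B; -1, u, v)$ is equivalent to the vanishing of $\sum_j (-1)^j h^{a,b}(\mathrm{Gr}^W_{a+b} H_{\mathrm{var},c}^j(\CM_B,\BC))$; but by Lemma~\ref{lem3.6} the summands for which $h^{a,b} \neq 0$ all carry the same sign $(-1)^{a+b}$ (since the weight $a+b = 2i'$ is even and $i' + j$ even forces $(-1)^j = (-1)^{i'}$, constant), so the alternating sum vanishes only if every summand vanishes. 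Therefore $h^{a,b}(\mathrm{Gr}^W_{a+b} H_{\mathrm{var},c}^d(\CM_B, \BC)) = 0$ for all $d$ whenever $a \neq b$, which is the assertion of the lemma.

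The main obstacle is making the no-cancellation argument airtight: one must be careful that the sign $(-1)^j$ with which $h^{a,b}(\mathrm{Gr}^W_{a+b} H^j_{\mathrm{var},c})$ enters $E_{\mathrm{var}}$ is genuinely constant across all $j$ that can contribute to a given bidegree $(a,b)$ — this is exactly what the second half of Lemma~\ref{lem3.6} ($w^{2i',j} = 0$ unless $i' + j$ even) guarantees, fixing $(-1)^j = (-1)^{i'} = (-1)^{(a+b)/2}$. Once that parity bookkeeping is in place, the rest is immediate from non-negativity of Hodge numbers and the fact that $E_{\mathrm{var}}(\CM_B; u,v)$ depends only on $uv$. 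I would also remark that this is the analogue, for compactly supported variant cohomology, of the well-known purity/Hodge--Tate statements for $\CM_B$ itself, and that by Poincar\'e duality the same conclusion transfers to ordinary variant cohomology $H^d_{\mathrm{var}}(\CM_B,\BQ)$.
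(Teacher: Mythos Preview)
Your proof is correct and follows essentially the same approach as the paper: the paper's argument is simply the one-line observation that Lemma~\ref{lem3.6} eliminates sign cancellation in the alternating sum defining $E_{\mathrm{var}}(\CM_B;u,v)$, so the fact (Proposition~\ref{prop3.5}) that this polynomial depends only on $uv$ forces each individual off-diagonal Hodge number to vanish. You have spelled out explicitly the parity bookkeeping (that $w^{2i',j}=0$ unless $i'\equiv j \pmod 2$ fixes $(-1)^j$ across all contributing $j$) which the paper leaves implicit.
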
 

As a corollary of Lemma \ref{lem3.7} and the Poincar\'e duality, we obtain that $H_{\mathrm{var}}^{d}(\CM_B, \BQ)$ is also of Hodge--Tate type.

\begin{cor}\label{cor3.8}
The mixed Hodge structure on $H_{\mathrm{var}}^{d}(\CM_B, \BQ)$ is of Hodge--Tate type.
\end{cor}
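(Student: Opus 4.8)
The plan is to deduce Corollary~\ref{cor3.8} from Lemma~\ref{lem3.7} using Poincar\'e duality on the nonsingular quasi-projective variety $\CM_B$, while being careful that the duality intertwines the $\Gamma$-action in the way needed to pass between the variant pieces of ordinary and compactly supported cohomology. First I would recall that since $\CM_B$ is smooth of pure dimension $N := (n^2-1)(2g-2)$, Poincar\'e--Lefschetz duality gives an isomorphism of mixed Hodge structures
\[
H^{d}(\CM_B, \BQ) \cong \left( H^{2N-d}_c(\CM_B, \BQ) \right)^\vee (-N),
\]
where $(-N)$ denotes the Tate twist by $N$. On graded pieces this reads $\mathrm{Gr}^W_{i+j} H^d \cong \left(\mathrm{Gr}^W_{2N-i-j}H^{2N-d}_c\right)^\vee$ with the Hodge bigrading sent $(i,j) \mapsto (N-i, N-j)$.

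The key observation is that this isomorphism is $\Gamma$-equivariant, because the $\Gamma$-action on $\CM_B$ is by algebraic automorphisms and hence preserves the fundamental class; consequently it carries the variant summand $H^{d}_{\mathrm{var}}(\CM_B,\BQ)$ isomorphically onto (the dual, suitably twisted, of) $H^{2N-d}_{\mathrm{var},c}(\CM_B,\BQ)$. Here I would note that dualizing a representation of the finite abelian group $\Gamma$ sends the trivial character to itself and permutes the nontrivial characters among themselves, so the invariant/variant decomposition is respected by taking duals; combined with the Tate twist (which is $\Gamma$-trivial) this gives
\[
\mathrm{Gr}^W_{i+j} H^{d}_{\mathrm{var}}(\CM_B, \BQ) \cong \left( \mathrm{Gr}^W_{2N-i-j} H^{2N-d}_{\mathrm{var},c}(\CM_B, \BQ) \right)^\vee,
\]
with Hodge types transformed by $(i,j)\mapsto(N-i,N-j)$. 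Since Lemma~\ref{lem3.7} asserts that the right-hand side has $h^{a,b}=0$ whenever $a\neq b$, and $N-i = N-j \iff i=j$, the left-hand side has $h^{i,j}=0$ whenever $i\neq j$. As $d$ was arbitrary, this is exactly the Hodge--Tate statement for $H^{d}_{\mathrm{var}}(\CM_B,\BQ)$.

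The main obstacle, such as it is, is purely bookkeeping: one must make sure the Tate twist is applied with the correct sign and weight so that ``$i=j$'' on one side corresponds precisely to ``$i=j$'' on the other, and one must justify that Poincar\'e duality is compatible with the $\Gamma$-equivariant structure and with mixed Hodge structures simultaneously (this is standard — e.g. via the six-functor formalism for mixed Hodge modules, or by citing the functoriality of Poincar\'e duality for MHS on the category of varieties with $\Gamma$-action). No genuinely new input beyond Lemma~\ref{lem3.7} is needed; in particular this corollary does not use primality of $n$ except insofar as Lemma~\ref{lem3.7} does.
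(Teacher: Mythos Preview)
Your proposal is correct and follows the same approach as the paper, which simply records that Corollary~\ref{cor3.8} is a consequence of Lemma~\ref{lem3.7} and Poincar\'e duality. Your added care about $\Gamma$-equivariance and the behavior of Hodge types under the Tate twist is exactly the bookkeeping needed to make that one-line deduction precise.
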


\subsection{Proof of Theorem \ref{thm1.5} (b).}

We use $F^\bullet H^*(X, \BC)$ to denote the Hodge filtration on the cohomology of an algebraic variety $X$. The Hodge filtration on $H^*(\CM_B, \BC)$ induces a Hodge filtration $F^\bullet H^*(\CM_B, \BC)$ on the variant cohomology.

We define the sub-vector spaces
\[
{^k\mathrm{Hdg}_{\mathrm{var}}^d}(\CM_B) : = F^kH_{\mathrm{var}}^{d}(\CM_B, \BC) \cap \bar{F}^kH_{\mathrm{var}}^d(\CM_B, \BC)  \cap W_{2k} H_{\mathrm{var}}^d(\CM_B, \BQ) \subset H_{\mathrm{var}}^d(\CM_B, \BQ)
\]
where $\bar{F}^\bullet H^*$ is the complex conjugate of the Hodge filtration. We obtain from Corollary \ref{cor3.8} that
\begin{equation}\label{321}
\mathrm{dim}\left( {^k\mathrm{Hdg}_{\mathrm{var}}^d}(\CM_B) \right) =\mathrm{dim} \left( \mathrm{Gr}^W_{2k} H^d(\CM_B, \BQ) \right).
\end{equation}

Recall the class $\eta \in H^2(\CM_B, \BQ)$ introduced in Section \ref{Section3.3}, which lies in ${^2\mathrm{Hdg}_{\mathrm{var}}^2}(\CM_B)$ by \cite{Shende}. Hence, Corollary \ref{cor3.3} implies that cupping with $\eta^i$ induces an isomorphism
\begin{equation}\label{233}
\eta^i: {^r\mathrm{Hdg}_{\mathrm{var}}^{r+c_n-i}}(\CM_B) \xrightarrow{\simeq}  {^{r+2i}\mathrm{Hdg}_{\mathrm{var}}^{r+c_n+i}}(\CM_B), \quad \forall r \in \BN.
\end{equation}

Now we consider 
\[
v_W^{i,j}:= \mathrm{dim}\left({^i\mathrm{Hdg}_{\mathrm{var}}^{i+j}}(\CM_B)\right).
\]
In view of (\ref{321}), it suffices to check that $\{v_W^{i,j}\}_{i,j}$ satisfies (i,ii,iii) of Proposition \ref{prop2.2} with $k$ and $m$ given by (\ref{km}).

The condition (i) follows from (\ref{233}). Next, we verify the condition (iii). By Lemma \ref{lem3.6} and Proposition \ref{prop3.5}, each summation $\sum_j{v_W^{i,j}}$ is given by a coefficient of the polynomial $E(q)$, and the condition (iii) follows from the symmetry (\ref{E11}).

Finally, we obtain from Proposition \ref{prop3.5} and the equation (\ref{E11}) that
\begin{equation}\label{23333}
E\left(\frac{1}{q}\right) q^{2\mathrm{dim}(\CM_B)} = E_{\mathrm{var}}(\CM_B; \sqrt{q}, \sqrt{q})q^{c_n}.
\end{equation}
By Corollary \ref{cor3.2}, the left-hand side of (\ref{23333}) computes \[
\mathrm{dim}\left(H_{\mathrm{var}}^d(\CM_{\mathrm{Dol}}, \BQ)\right) = \sum_{i+j=d} v_W^{i,j},
\]
while the right-hand side computes $\sum_{j} v_W^{d-c_n,j}$ by the definition of $E$-polynomials and the vanishing of Lemma \ref{lem3.6}. Hence the condition (ii) holds. This completes the proof.\qed

\section{Hitchin moduli spaces and compact hyper-K\"ahler manifolds} \label{Section4}

\subsection{Overview}

A crucial step in the proof of the $P=W$ conjecture for genus 2 and $\mathrm{GL}_n$ in \cite{dCMS} is to use degenerations connecting certain compact hyper--K\"ahler manifolds and Hitchin moudli spaces. More precisely, we embedd a genus 2 curve $C$ into an abelian surface $A$,
\begin{equation}\label{j}
j: C \hookrightarrow A.
\end{equation}
The degeneration to the normal cone associated with (\ref{j}) yields a flat family 
\begin{equation}\label{eqn16}
\CM \to  \mathbb{A}^1.
\end{equation}
Its general fiber is a compact (non-simply connected) hyper--K\"ahler manifold $\CM_{n[C],A}$ which is the moduli of certain stable 1-dimensional sheaves supported on the curve class
\[ 
n[C] \in H_2(A, \BZ),
\]
and its central fiber is the $\mathrm{GL}_n$-Hitchin moduli space $\CM_{\mathrm{Dol}}^{\mathrm{GL}_n}$. See  \cite{DEL} and \cite[Section 4.2]{dCMS} for more details about this degeneration.

We construct in \cite[Section 4.3]{dCMS} a \emph{surjective} specialization morphism 
\begin{equation}\label{sp}
\mathrm{sp}^!: H^*(\CM_{n[C],A}, \BQ) \rightarrow H^*(\CM_{\mathrm{Dol}}^{\mathrm{GL}_n}, \BQ) 
\end{equation}
which is a morphism of $\BQ$-algebras preserving the perverse filtrations and tautological classes constructed from universal families. Hence the morphism (\ref{sp}) governs the tautological generators in $H^*(\CM_{\mathrm{Dol}}^{\mathrm{GL}_n}, \BQ)$.

A degeneration similar to $\CM \to \mathbb{A}^1$ can also be constructed for the $\mathrm{SL}_n$-Hitchin moduli space $\CM_{\mathrm{Dol}}$. More precisely, under the degeneration (\ref{eqn16}), the albenese map (see \cite{Yo})
\[
\CM_{n[C],A} \rightarrow \mathrm{Pic}^d(A) \times A
\]
degenerates to the morphism
\[
\mathrm{det}\times \mathrm{trace}: \CM_{\mathrm{Dol}}^{\mathrm{GL}_n} \rightarrow \mathrm{Pic}^d(C) \times \BA^2.
\]
By taking fibers, we obtain a flat family $\CM^{\mathrm{SL}} \to \BA^1$ with general fiber $\CK_{n[C],A}$ an irreducible hyper--K\"ahler manifold of Kummer type\footnote{We say that a hyper-K\"ahler manifold is of Kummer type if it deforms to a generalized Kummer variety.} and central fiber the $\mathrm{SL}_n$-Hitchin moduli space $\CM_{\mathrm{Dol}}$. Moreover, the variety $\CK_{n[C],A}$ admits a Lagrangian fibration 
\[
\CM_{\mathrm{Dol}} \rightarrow \BP^N= |nC|
\]
degenerating to the Hitchin map $\pi: \CM_{\mathrm{Dol}} \rightarrow \Lambda$.\footnote{Since this construction is not essentially used in the present paper, we omit further details.} By the construction in \cite[Section 4.3]{dCMS}, this yields a specialization morphism
\begin{equation}\label{sp2}
    \mathrm{sp}^!: H^*(\CK_{n[C],A}, \BQ) \to H^*(\CM_{\mathrm{Dol}}, \BQ)
\end{equation}
preserving the perverse filtrations. It is natural to ask whether (\ref{sp2}) is surjective. More general, we are interested in exploring whether the cohomology of $H^*(\CM_{\mathrm{Dol}}, \BQ)$ can be governed by the cohomology of a compact irreducible hyper-K\"ahler manifold, so that we can extend the method of \cite{dCMS} to studying the perverse filtration for the $\mathrm{SL}_n$-Hitchin system $\pi: \CM_{\mathrm{Dol}}\to \Lambda$.

\begin{question}
Does there exist a grading preserved surjective morphism 
\begin{equation}\label{eqn18}
f: H^*(M, \BQ) \rightarrow H^*(\CM_{\mathrm{Dol}}, \BQ)
\end{equation}
of graded $\BQ$-algebras such that $M$ is a compact irreducibel hyper-K\"ahler manifold?
\end{question}

In this Section, we discuss obstructions to the existence of (\ref{eqn18}).

\subsection{An obstruction for $\mathrm{SL}_2$}
From now on, let $\CM_{\mathrm{Dol}}$ be the moduli space of stable Higgs bundles attached to a genus 2 curve $C$, the group $\mathrm{SL}_2$, and a degree 1 line bundle $L \in \mathrm{Pic}^1(C)$; see Section \ref{Section1}. The variety $\CM_{\mathrm{Dol}}$ is nonsingular of dimension $6$.

The following proposition provides a necessary condition for the cohomology of $\CM_{\mathrm{Dol}}$ to be governed by the cohomology of another manifold $M$.

\begin{prop}\label{prop4.2}
Assume $M$ is a manifold with a grading preserved surjective morphism
\begin{equation}\label{eqn20}
f: H^*(M, \BQ) \rightarrow H^*(\CM_{\mathrm{Dol}}, \BQ)
\end{equation}
of graded $\BQ$-algebras. Then we have
\begin{equation}\label{dim}
\mathrm{dim} \left( H^5(M, \BQ)/\big{[}( H^2(M, \BQ) \cup H^3(M, \BQ)\big{]} \right) \geq 30.
\end{equation}
\end{prop}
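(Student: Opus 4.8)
The plan is to establish the inequality \eqref{dim} by exploiting the structure of $H^*(\CM_{\mathrm{Dol}},\BQ)$ for the genus $2$, $\mathrm{SL}_2$ case and transporting the obstruction back along the surjection $f$. First I would observe that a grading-preserved surjection of graded $\BQ$-algebras $f\colon H^*(M,\BQ)\to H^*(\CM_{\mathrm{Dol}},\BQ)$ sends the subspace $H^2(M)\cup H^3(M)\subset H^5(M)$ onto the subspace $H^2(\CM_{\mathrm{Dol}})\cup H^3(\CM_{\mathrm{Dol}})\subset H^5(\CM_{\mathrm{Dol}})$ (surjectivity in each degree is exactly what is needed here), so $f$ induces a surjection
\[
H^5(M,\BQ)\big/\big[H^2(M,\BQ)\cup H^3(M,\BQ)\big] \twoheadrightarrow H^5(\CM_{\mathrm{Dol}},\BQ)\big/\big[H^2(\CM_{\mathrm{Dol}},\BQ)\cup H^3(\CM_{\mathrm{Dol}},\BQ)\big].
\]
Hence it suffices to show the right-hand quotient has dimension at least $30$. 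This reduces the problem to a concrete computation on the well-understood six-dimensional moduli space $\CM_{\mathrm{Dol}}$.

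Next I would assemble the needed cohomological data for $\CM_{\mathrm{Dol}}$ in low degrees. The Betti numbers and the ring structure of $H^*(\CM_{\mathrm{Dol}},\BQ)$ for the genus $2$, rank $2$, odd-degree $\mathrm{SL}_2$-Higgs moduli space are classically known (Hitchin's original computation, together with the $\Gamma$-decomposition \eqref{decomp}); in particular one knows $\dim H^2=1$, and the dimensions of $H^3$ and $H^5$, together with the images of the cup-product maps $H^2\otimes H^3\to H^5$ and $\mathrm{Sym}$ or $\wedge$ of lower pieces into $H^5$. Using the decomposition into the $\Gamma$-invariant (tautological, $=$ $\mathrm{PGL}_2$) part and the variant part, and using Theorem \ref{thm1.5}(a) (or directly Hitchin/dCHM) to pin down which classes in $H^5$ are hit by products $H^2\cup H^3$, I would compute $\dim\big(H^5(\CM_{\mathrm{Dol}})/[H^2\cup H^3]\big)$ and check it equals (or exceeds) $30$. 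The cup product with $\eta$ spanning $H^2$ is injective on $H^3$ by Corollary \ref{cor3.3} / hard Lefschetz considerations on the variant part and by relative hard Lefschetz on the invariant part, so $\dim(H^2\cup H^3)=\dim H^3$, which makes the count $\dim H^5 - \dim H^3$ (modulo checking $H^2\cup H^3$ is not accidentally larger, which it cannot be, and that no extra products from $H^1$, etc., are being implicitly counted — here only $H^2\cup H^3$ appears, so nothing else enters).

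The main obstacle I expect is purely bookkeeping: correctly identifying the ring generators of $H^*(\CM_{\mathrm{Dol}},\BQ)$ in degrees $\le 5$ and verifying that the cup-product image $H^2(\CM_{\mathrm{Dol}})\cup H^3(\CM_{\mathrm{Dol}})$ inside $H^5$ is exactly $\eta\cdot H^3$ of the expected dimension, so that the quotient dimension is the claimed $30$ and not something smaller (which would come from unexpected relations making more of $H^5$ decomposable) — since any such decrease would weaken, not strengthen, the obstruction, I would need the precise ring structure rather than just Betti numbers. Concretely, I would use that for genus $2$, $\mathrm{SL}_2$ one has $\dim H^3 = 6$ (the six degree-one-type classes tensored appropriately, or equivalently $b_3=6$) and $\dim H^5 = 36$, giving $36-6 = 30$; the step that requires care is confirming $b_3$ and $b_5$ and that $\eta\cup(-)\colon H^3\to H^5$ is injective with image meeting no further relations, which follows from hard Lefschetz together with the explicit description of the cohomology ring. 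Once those numbers are verified, \eqref{dim} follows immediately from the induced surjection above.
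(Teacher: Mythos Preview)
Your reduction via the induced surjection of quotients is exactly the paper's first step, and the underlying idea---that the variant cohomology sits entirely in degree $5$ with dimension $30$---is also the paper's key input. Where you diverge is in the bookkeeping: you propose to compute the quotient as $b_5-b_3$ after checking injectivity of $\eta\cup(-)$, whereas the paper bypasses any Betti-number computation. Once Hitchin's result gives $H^*_{\mathrm{var}}(\CM_{\mathrm{Dol}},\BQ)=H^5_{\mathrm{var}}(\CM_{\mathrm{Dol}},\BQ)$ with $\dim=30$, it follows immediately that $H^2$ and $H^3$ lie in the $\Gamma$-invariant part, hence so does $H^2\cup H^3\subset H^5(\CM_{\mathrm{Dol}})^\Gamma$. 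Then $f\big(H^2(M)\cup H^3(M)\big)\subset H^5(\CM_{\mathrm{Dol}})^\Gamma$, and the quotient surjects onto $H^5_{\mathrm{var}}$. No injectivity of $\eta$, no relative hard Lefschetz, no explicit $b_3,b_5$ are needed.

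Your route would also work, but the specific numbers you quote are off: for genus $2$, $\mathrm{SL}_2$, the invariant cohomology is generated by $\alpha\in H^2$, $\psi_1,\ldots,\psi_{2g}\in H^3$, $\beta\in H^4$, so $b_3=2g=4$ (not $6$), and correspondingly $b_5=34$ (not $36$); the difference is still $30$. Since getting these right is exactly the ``bookkeeping obstacle'' you anticipated, the paper's shortcut is worth adopting.
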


\begin{proof}
Assume that (\ref{eqn20}) is surjective. Recall the decomposition (\ref{decomp}). By \cite{Hit}, we have
\begin{equation}\label{123}
H_{\mathrm{var}}^*(\CM_{\mathrm{Dol}}, \BQ) = H_{\mathrm{var}}^5(\CM_{\mathrm{Dol}}, \BQ) ,\quad  \mathrm{dim}\left(H_{\mathrm{var}}^5(\CM_{\mathrm{Dol}}, \BQ)\right) =30.
\end{equation}
Since $H^2(\CM_{\mathrm{Dol}}, \BQ)$ and $H^3(\CM_{\mathrm{Dol}}, \BQ)$ lie in the invariant part $H^*(\CM_{\mathrm{Dol}}, \BQ)^\Gamma$ and $f$ is grading preserved, we have
\[
f\big{[}( H^2(M, \BQ) \cup H^3(M, \BQ)\big{]} \subset H^5(\CM_{\mathrm{Dol}}, \BQ)^\Gamma. 
\]
Hence we obtain a surjective morphism
\[
H^5(M, \BQ)/\big{[}( H^2(M, \BQ) \cup H^3(M, \BQ)\big{]} \rightarrow H^5_{\mathrm{var}}(\CM_{\mathrm{Dol}},\BQ)
\]
which implies (\ref{dim}).
\end{proof}

\subsection{Compact hyper-Ka\"ahler manifolds}
Recall that all known examples of compact irreducible hyper-K\"ahler manifolds belong to the following families:
\begin{enumerate}
    \item[(a)] The $K3$ type and the Kummer type \cite{B};
    \item[(b)] O'Grady's 6-dimensional family (OG6 type) \cite{OG6};
    \item[(c)] O'Grady's 10-dimensional family (OG10 type) \cite{OG10}.
\end{enumerate}

Combining with structural results of the cohomology of hyper--K\"ahler manifolds \cite{LL, Ver95, Ver96}, Proposition \ref{prop4.2} implies that $M$ cannot be one of the known examples listed above of irreducible hyper--K\"ahler 6-folds for a surjective morphism (\ref{eqn20}) to exist. 

\begin{prop}\label{prop4.3}
Assume $M$ is a hyper-K\"ahler 6-fold of $K3$, Kummer, or OG6 type, then any grading preserved morphism 
\[
f: H^*(M, \BQ) \rightarrow H^*(\CM_{\mathrm{Dol}}, \BQ)
\]
is not surjective. In particular, the specialization morphism (\ref{sp2}) is not a surjection.
\end{prop}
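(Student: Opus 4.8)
The plan is to combine the numerical obstruction of Proposition \ref{prop4.2} with the known structure of the cohomology rings of hyper-K\"ahler $6$-folds in the three families $K3$, Kummer, and OG6, showing in each case that the quotient $H^5(M,\BQ)/[H^2(M,\BQ)\cup H^3(M,\BQ)]$ has dimension strictly less than $30$, in fact often zero. First I would recall the relevant Betti/Hodge numbers: for a hyper-K\"ahler $6$-fold the odd cohomology in low degrees is controlled by $H^3$, and in the $K3^{[3]}$ case $H^3=0$, while for the generalized Kummer $K_3(A)$ one has $b_3$ governed by the factor $A$ (so $b_3 = 8$) and $b_5$ is likewise small; for OG6 the odd Betti numbers were computed by Mongardi--Rapagnetta--Saccà, giving $b_3 = 12$ and $b_5 = 0$ (or in any case small). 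The key input is then the Verbitsky/Looijenga--Lunts structural result that the subalgebra of $H^*(M,\BQ)$ generated by $H^2(M,\BQ)$ is the image of $\mathrm{Sym}^\bullet H^2(M,\BQ)$ modulo the Verbitsky relations, together with the fact (from \cite{LL, Ver95, Ver96}) that in each of these families the full cohomology ring is generated by $H^2$ and $H^3$ — indeed for $K3^{[n]}$ and OG6 the ring is generated by $H^2$ alone in the relevant range, and for generalized Kummer the extra generators sit in $H^3$.

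The argument then proceeds family by family. For $M$ of $K3$ type: $H^3(M,\BQ) = 0$, so $H^2\cup H^3 = 0$, and one must instead show $\dim H^5(M,\BQ) < 30$; by Göttsche's formula for $b_5(K3^{[3]})$ this Betti number is small (it is governed by $b_3(K3) = 0$ contributions, hence $b_5(K3^{[3]}) = 0$), contradicting \eqref{dim}. For $M$ of Kummer type: here $H^2\cup H^3 \subset H^5$ need not vanish, so I would argue that the cup product map $H^2(M,\BQ)\otimes H^3(M,\BQ)\to H^5(M,\BQ)$ is surjective — this follows from the explicit description of $H^*(K_3(A),\BQ)$ as the invariants, under the relevant finite group, of a Nakajima-type construction on $H^*(A)$, where $H^*(A)$ is an exterior algebra generated in degrees $1$ and the product structure forces $H^5$ to be spanned by products of classes of degrees $2$ and $3$; hence the quotient in \eqref{dim} is zero. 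For $M$ of OG6 type: by the Mongardi--Rapagnetta--Saccà computation $b_5(M)$ is small (indeed $b_5 = 0$), so again \eqref{dim} fails. In all three cases Proposition \ref{prop4.2} is violated, so no surjective grading-preserving $\BQ$-algebra morphism $f$ exists. Finally, since the specialization morphism \eqref{sp2} is by construction a grading-preserving morphism of $\BQ$-algebras with source $H^*(\CK_{n[C],A},\BQ)$ and $\CK_{2[C],A}$ is a hyper-K\"ahler $6$-fold of Kummer type, it cannot be surjective.

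The main obstacle I anticipate is pinning down precisely the ring-generation statement in the Kummer case: I must verify that $H^5(K_3(A),\BQ)$ really is generated by $H^2\cup H^3$ and not merely that the ring as a whole is generated by $H^2$ and $H^3$ in all degrees — this requires either a direct count using the known Hodge numbers $h^{p,q}(K_3(A))$ (so that $b_5 = 2b_3$ and each degree-$5$ class is accounted for by a degree-$2$ times a degree-$3$ class) or invoking the explicit LLV-module structure of $H^*(K_3(A))$ over the Looijenga--Lunts--Verbitsky Lie algebra, in which degree $5$ lies in the image of the raising operators applied to degree $3$. A secondary point requiring care is that the three "known families" should be interpreted up to deformation equivalence, which is harmless since Betti numbers and the ring structure are deformation invariant for hyper-K\"ahler manifolds; I would state this explicitly. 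Once these generation facts are in place, the numerical contradiction with the bound $30$ of Proposition \ref{prop4.2} is immediate.
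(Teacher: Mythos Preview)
Your approach is essentially the paper's: reduce each family to the numerical obstruction of Proposition~\ref{prop4.2}, disposing of the $K3$ and OG6 types because their odd cohomology vanishes, and handling the Kummer type by showing $H^2\cup H^3 = H^5$. Two small corrections and one sharpening: for OG6 all odd Betti numbers vanish (so $b_3=0$, not $12$), and your proposed ``direct count'' for the Kummer case cannot work since dimension equalities alone do not establish surjectivity of a cup-product map---the paper resolves exactly the obstacle you anticipate by invoking the explicit LLV decomposition of \cite[Corollary~3.6]{GKLR}, which shows that $H^{\mathrm{odd}}(M,\BR)$ is an \emph{irreducible} $\mathfrak{so}(4,5)$-module with highest weight vector in $H^3$, whence the raising operators (cupping with $H^2$) surject onto $H^5$.
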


\begin{proof}
By (\ref{123}), the variety $\CM_{\mathrm{Dol}}$ has non-trivial odd cohomology. Therefore the calculations of \cite{Go1, MRS} imply that $M$ is not of $K3$ or OG6 type whose odd cohomology vanishes.

The cohomology of a manifold of Kummer type admits an action of the Looijenge--Lunts--Verbitsky (LLV) Lie algebra $\mathfrak{so}(4,5)$; see \cite{LL, Ver95, Ver96}. If $M$ is 6-dimensional, the precise form of the LLV decomposition of $H^*(M, \BR)$ with respect to $\mathfrak{so}(4,5)$-representations was calculated in \cite[Corollary 3.6]{GKLR}. In particular, the odd cohomology $H^{\mathrm{odd}}(M, \BR)$ is an irreducible $\mathfrak{so}(4,5)$-module whose highest weight vector lying in $H^3(M, \BR)$. Therefore we obtain that
\[
 H^2(M, \BR) \cup H^3(M, \BR) = H^5(M, \BR).
\]
This contradicts Proposition \ref{prop4.2}.
\end{proof}

\end{document}